\numberwithin{equation}{section}
\newcommand{\normmm}[1]{{\left\vert\kern-0.25ex\left\vert\kern-0.25ex\left\vert #1
    \right\vert\kern-0.25ex\right\vert\kern-0.25ex\right\vert}}
\begin{document}
\newtheorem{Proposition}{Proposition}[section]
\newtheorem{Lemma}[Proposition]{Lemma}
\newtheorem{Corollary}[Proposition]{Corollary}
\newtheorem{Definition}[Proposition]{Definition}
\newtheorem{Theorem}[Proposition]{Theorem}
\newtheorem{Remark}[Proposition]{Remark}
 



\vspace{8mm}

\title[Inverse obstacle scattering with a single moving emitter]{Inverse obstacle scattering with a single moving emitter}
\author[Yu Sun et al]{Yu Sun, Bo Chen$^*$, Peng Gao, Qiuyi Li and Yao Sun$^*$}

\subjclass[2010]{65M32, 35L05}
\keywords{time domain; inverse scattering problem; direct sampling method; moving emitter}
\thanks{College of Science, Civil Aviation University of China, Tianjin, People's Republic of China}
\thanks{Corresponding author: Bo Chen: charliecb@163.com; Yao Sun: sunyao10@mails.jlu.edu.cn}
\thanks{Submitted October 10, 2025.}

\begin{abstract}
This paper is concerned with time domain forward scattering and inverse scattering problems with a single moving point source as the emitter. Approximate solutions are provided for the forward scattering problem with a moving emitter. Regarding the inverse problem, in addition to a basic indicator function based on the approximate solutions, a novel indicator function is developed to construct the direct sampling method to recover both point-like and extended scatterers. Numerical experiments demonstrate that the proposed algorithms are effective in reconstructing both two-dimensional and three-dimensional scatterers with a single moving emitter.
\end{abstract}

\maketitle





\section{Introduction}

Scattering and inverse scattering problems have achieved considerable developments over the past decades. In general, the scattering problems can be divided into two categories, frequency domain problems and time domain problems. For frequency domain problems, substantial theoretical achievements have been published \cite{BG scattering elastic shell medium,D Colton book,Integral Equation Methods,Y Sun Indirect boundary integral equation method}, and diverse numerical algorithms have been developed, such as factorization methods \cite{K. H. Leem factorization methods inverse scattering}, iterative methods \cite{Gang Bao iteration method}, sampling methods \cite{D Colton LSM inverse scattering,JUN ZOU near field,X. Liu sampling method for multiple multiscale targets}, and other imaging methods \cite{G. Bao Inverse scattering direct imaging algorithm, J. Lai direct imaging method for the inverse obstacle scattering, J. Li singular and hypersingular}. Additionally, various related problems are studied, such as the co-inversion of sources and scatterers \cite{Y. Chang Simultaneous recovery near-field,D. Zhang Co-inversion}. 

Compared with frequency domain problems, the study of time domain problems is more complex. However, time domain problems have received more and more attentions since time-dependent data is generally more accessible. Owing to advancements in various technologies such as the time domain boundary integral equation method \cite{CB scattering and inverse scattering,Sayas Time Domain Boundary Integral Equations} and the rapid solution of wave equation \cite{Banjai Rapid solution}, the time domain direct scattering problem has been well analyzed and numerically solved. For time domain inverse scattering problems, sampling methods, including the linear sampling method and direct sampling methods, are important among various numerical methods. The time domain linear sampling method is proved to be effective for Dirichlet obstacles \cite{Q Chen LSM effective}, Neumann and Robin obstacles \cite{Haddar improved time domain linear sampling for Robin and Neumann}, penetrable medium \cite{GYK Toward linear sampling method} and cracks \cite{Y Yue linear sampling method for cracks}. Meanwhile, the research on time domain direct sampling methods has also made significant progress. The advantage of this type of method lies in the fact that it does not require solving the time-dependent wave equation. Effective and efficient sampling schemes based on the migration method are proposed in \cite{GYK DSM reverse time}, and the numerical algorithms based on the Green's function and time convolution are proposed in \cite{CB point-like scatterer} and \cite{YQQ DSM Green function}. Based on the Fourier-Laplace transform, an inherent connection between the time domain and frequency domain direct sampling methods is established in \cite{GYK DSM connection}. The asymptotic properties of the indicator function for the point-like scatterer are proved according to the Fourier-Laplace transform in \cite{Hong Guo DSM}.

In this paper, we consider the forward and inverse obstacle scattering with a single moving point source as the emitter. Inverse scattering with moving point sources has engineering applications including geoacoustic inversion \cite{Yining Shen Geoacoustic inversion} and Doppler tomography imaging \cite{Doppler Tomography Imaging}. For typical forward and inverse scattering problems, the incident wave is usually chosen to be a spherical wave or a plane wave. Few studies concern with incident waves from moving point sources. Nevertheless, the research of the inverse source problem with moving point sources is helpful for the analysis in this paper. For instance, \cite{algebraic moving ui} provides an algebraic expression for the wave field generated by a moving point source, and \cite{WXC} investigates the mathematical design of moving point sources as acoustic emitters. Additionally, numerical schemes to reconstruct three-dimensional time-dependent point sources are proposed in \cite{CB reconstruct three dimensional time-dependent point sources}, and a deterministic-statistical approach for reconstructing the moving path of a moving source is proposed in \cite{GYK statistical reconstruct moving sources}.

For the forward scattering problem with a moving point source, we derive approximate solutions to the scattering problem under the assumption that the speed of the moving source is below the sound speed. For the inverse problem, we propose an indicator function that is directly dependent on the approximate solution, as well as a novel indicator function based on the time convolution. Numerical experiments with a single moving emitter validate the efficiency of the direct sampling methods based on the proposed indicator functions, and both point-like and extended scatterers can be accurately reconstructed. 

The rest of the paper is organized as follows. In Section 2, we present the analysis of the forward scattering problem, and the corresponding approximate solutions are exhibited. Section 3 is devoted to the construction and analysis of time domain direct sampling methods. Numerical experiments are provided in Section 4 to verify the effectiveness of our methods. The last section is the conclusion.

\section{Forward scattering problem and the approximate solutions}

Considering the forward scattering problem with a moving emitter, let $D \subset \mathbb{R}^3$ be the region where the sound-soft scatterer is located. Assume that $D$ is bounded with piecewise Lipschitz boundary $\partial D $ and connected exterior $\mathbb{R}^{3} \setminus \overline{D}$. Denote by $s(t)$ the trajectory of the moving emitter, where $s:\mathbb{R} \to \mathbb{R}^3$ is assumed to be $C^1$-continuous. Suppose that the measurement surface $\Gamma_m \subset  \mathbb{R}^{3} \setminus \overline{D}$ is a closed Lipschitz surface. Figure \ref{geo} presents the two-dimensional geometrical setting of the time-domain acoustic scattering problem with a single moving emitter.

\begin{figure}[!htbp]
		\centering
		\begin{tabular}{ccc}
			\includegraphics[width=0.4\textwidth]{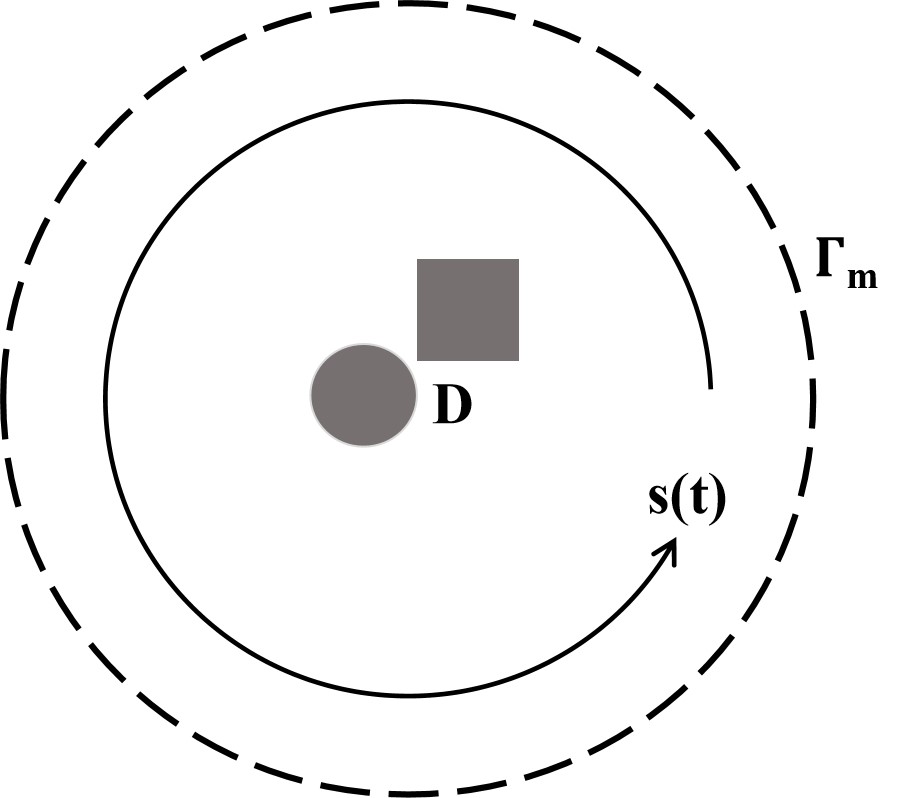}
		\end{tabular}
\caption{Two-dimensional geometrical setting of the problem. }
\label{geo}
\end{figure}

Denote by $u^i$  the incident field, $u^{tot}$ the total field, and $u=u^{tot}-u^i$ the scattered field. The incident wave field $u^i(x,t)$ generated by a single moving point source satisfies 
\begin{equation*}
c^{-2}\partial_{tt}u^i(x,t)-\triangle u^i(x,t)=\lambda(t)\delta(x-s(t)), \quad \quad x\in \mathbb{R}^{3},~t\in \mathbb{R},
\end{equation*}
where $c>0$ denotes the sound speed of the homogeneous background medium, $\partial_{tt}u^i=\frac{\partial^2u^i}{\partial t^2}$, $\triangle$ is the Laplacian in $\mathbb{R}^{3}$, $\lambda(t)$ is the signal function, and $\delta$ is the Dirac delta distribution. Additionally, the signal function $\lambda(t)$ is assumed to be causal, which means $\lambda(t)=0$ for $t<0$.

Denote by $f(x,t)$ the source term, that is, $f(x,t)=\lambda(t)\delta (x-s(t))$. Then $u^{i}(x,t)$ can be represented as 
\begin{equation*}
    u^{i}(x,t)=\int_{\mathbb{R}} \int_{\mathbb{R}^{3} }G(x,t-\mu;y)f(y,\mu) \mathrm{d}y\mathrm{d}\mu,
\end{equation*}
where
\begin{equation*}
    G(x,t;y)=\frac{\delta(t-c^{-1}|x-y|)}{4\pi|x-y|}
\end{equation*}
is the Green's function for the operator $c^{-2}\partial_{tt}-\bigtriangleup$ in $\mathbb{R}^{3}$.

Assume that the moving velocity $v(t)=\frac{\mathrm{d}s(t)}{\mathrm{d}t}$ of the moving emitter satisfies $\left |v(t)  \right | <c$. Then the incident field generated by the moving point source is \cite{algebraic moving ui}
\begin{equation}\label{eq:ui for moving point source}
u^{i}(x,t;s(t))=\frac{\lambda(\tau)}
{4\pi|x-s(\tau)|\left(1-\frac{v(\tau)\cdot(x-s(\tau))}{c|x-s(\tau)|}\right)}, \quad \quad x \in \mathbb{R}^{3} \backslash \{s(t)\},~t \in \mathbb{R},
\end{equation}
where the retarded time $\tau$ satisfies
\begin{equation*}
t=\tau+c^{-1}|x-s(\tau)|.
\end{equation*}

The scattered field $u(x,t)$ satisfies the homogeneous wave equation
\begin{equation}\label{eq:problem1}
c^{-2}\partial_{tt}u(x,t)-\triangle u(x,t)=0, \quad \quad x\in \mathbb{R}^{3},~t\in \mathbb{R},
\end{equation}
the Dirichlet boundary condition
\begin{equation}\label{eq:problem2}
u(x,t)=-u^i(x,t), \quad \quad x\in \partial D,~t\in \mathbb{R}
\end{equation}
and the initial condition
\begin{equation}\label{eq:problem3}
u(x, 0)=\partial_{t}u(x, 0)=0, \quad \quad x\in \mathbb{R}^3 \setminus \overline{D}.
\end{equation}

The Kirchhoff's formula indicates that the solution to the wave equation \eqref{eq:problem1} can be represented as \cite{Sayas Time Domain Boundary Integral Equations}
\begin{equation}
u(x,t)=\int_{\partial D} (G*g )(x,t;y)\mathrm{d}s_y, \quad \quad x\in \mathbb{R}^3 \backslash \partial D,~t \in \mathbb{R},
\label{Kirchhoff}
\end{equation}
where 
\begin{equation*}
    (G*g )(x,t;y) = \int_{\mathbb{R}} G(x,t-\mu;y)g(\mu;y) \mathrm{d}\mu=\frac{g(t-c^{-1}|x-y|;y)}{4\pi|x-y|},
\end{equation*}
and $g(t;y)$ is the solution to the retarded potential boundary integral equation
\begin{equation}
\int_{\partial D} (G*g )(x,t;y)\mathrm{d}s_y=-u^i(x,t), \quad \quad x\in \partial D,~t \in \mathbb{R}.
\label{eq:boundary integral equation}
\end{equation}

The approximate solution to the direct scattering problem is an important tool for the reconstruction of the scatterer. We provide the analysis of the approximate solutions in the following. 

\begin{Lemma}[cf.~Bo Chen \& Yao Sun~\cite{CB point-like scatterer}]\label{Lemma:approximate solution}
Let  $u(x, t)$  be the solution to the forward scattering problem \eqref{eq:problem1}-\eqref{eq:problem3} and  $g(t ; y)$  be the solution to the boundary integral equation \eqref{eq:boundary integral equation}. Assume that  $g(t ; y)$  is Lipschitz continuous with respect to both  $t \in \mathbb{R}$  and  $y \in \partial D $. Moreover, assume that $diam  (D)=   \varepsilon(x) d(x)$  for $ x \in \Gamma_{m} $, where  $0<\varepsilon(x) \ll 1 $ and  $d(x)=\underset{ y \in \partial D}{\min} |x-y|$ . For any  $y_{0} \in D$  and  $y_{1} \in \partial D $ , we have
\begin{equation*}
    u(x, t)=\frac{Ag(t-c^{-1}|x-y_0|;y_1)}{4\pi|x-y_0|}+O(\varepsilon(x)), \quad \quad x \in \Gamma_{m}, t \in \mathbb{R},
\end{equation*}
where $A$ is the area of  $\partial D$ .
 \end{Lemma}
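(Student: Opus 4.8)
The plan is to start from the Kirchhoff representation \eqref{Kirchhoff},
\begin{equation*}
u(x,t)=\int_{\partial D}\frac{g(t-c^{-1}|x-y|;y)}{4\pi|x-y|}\,\mathrm{d}s_y,\qquad x\in\Gamma_m,
\end{equation*}
and to exploit the geometric smallness hypothesis $\mathrm{diam}(D)=\varepsilon(x)d(x)$ with $\varepsilon(x)\ll 1$: since $\partial D$ is tiny compared with its distance to $x$, the integrand is nearly constant over $\partial D$. I would therefore freeze it at the reference points $y_0\in D$ and $y_1\in\partial D$, pull the resulting constant out of the integral, and absorb $\int_{\partial D}\mathrm{d}s_y=A$ as the area factor. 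What remains is to show that freezing the integrand costs only $O(\varepsilon(x))$.

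First I would collect the geometric estimates. For every $y\in\partial D$ one has $|y-y_0|\le\mathrm{diam}(D)$ and $|y-y_1|\le\mathrm{diam}(D)$, so the reverse triangle inequality gives
\begin{equation*}
\bigl||x-y|-|x-y_0|\bigr|\le|y-y_0|\le\mathrm{diam}(D)=\varepsilon(x)d(x).
\end{equation*}
Since $x\notin\overline D$, the segment $[x,y_0]$ meets $\partial D$, so $|x-y_0|\ge d(x)$; together with $|x-y_0|\le d(x)+\mathrm{diam}(D)$ this yields $|x-y_0|=d(x)\bigl(1+O(\varepsilon(x))\bigr)$, whence
\begin{equation*}
\frac{1}{4\pi|x-y|}=\frac{1}{4\pi|x-y_0|}\bigl(1+O(\varepsilon(x))\bigr).
\end{equation*}

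Next I would freeze $g$ using its joint Lipschitz continuity in $t$ and $y$: with Lipschitz constant $L$,
\begin{equation*}
\bigl|g(t-c^{-1}|x-y|;y)-g(t-c^{-1}|x-y_0|;y_1)\bigr|\le L\Bigl(c^{-1}\bigl||x-y|-|x-y_0|\bigr|+|y-y_1|\Bigr)=O\bigl(\varepsilon(x)d(x)\bigr).
\end{equation*}
Combining the two displays, the integrand equals $\dfrac{g(t-c^{-1}|x-y_0|;y_1)}{4\pi|x-y_0|}$ plus an error whose numerator is $O(\varepsilon(x)d(x))$ and whose denominator is of size $d(x)$; the factor $d(x)$ cancels and leaves a pointwise error $O(\varepsilon(x))$. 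Integrating over $\partial D$ then produces $u(x,t)=\dfrac{A\,g(t-c^{-1}|x-y_0|;y_1)}{4\pi|x-y_0|}+A\cdot O(\varepsilon(x))$, which is the claim because $A$ is a fixed constant.

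The step requiring the most care — and the main obstacle — is to verify that the cancellation $O(\varepsilon(x)d(x))/d(x)=O(\varepsilon(x))$ is genuinely uniform, and in particular that the amplitude-perturbation term $g\cdot O(\varepsilon(x))/|x-y_0|$ is also $O(\varepsilon(x))$. This relies on the lower bound $d(x)\ge\mathrm{dist}(\Gamma_m,\overline D)>0$, valid because $\Gamma_m$ is a compact surface disjoint from $\overline D$, and on the boundedness of $g$ over the relevant (bounded) range of retarded times, which follows from its Lipschitz continuity on the compact set $\partial D$. These two facts also make the leading term insensitive to the particular choice of $y_0$ and $y_1$, since any other admissible choice differs by at most $\mathrm{diam}(D)=\varepsilon(x)d(x)$ in each argument and hence only perturbs the result by a further $O(\varepsilon(x))$.
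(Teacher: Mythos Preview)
The paper does not supply its own proof of this lemma; it is quoted from the cited reference \cite{CB point-like scatterer} and used as a black box in the proof of Proposition~\ref{Proposition: approximate solution}. That said, your argument is correct and is exactly the natural one: the freezing-by-Lipschitz-continuity step you carry out is the same manoeuvre the paper itself performs in the proof of Proposition~\ref{Proposition: approximate solution} (see the derivation of \eqref{g(t;y)} there), so your proposal is fully in line with the paper's methodology.
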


\begin{Proposition}\label{Proposition: approximate solution}
 Let $u(x, t)$ be the solution to the forward scattering problem \eqref{eq:problem1}-\eqref{eq:problem3}, and  $g(t ; y)$  be the solution to the boundary integral equation \eqref{eq:boundary integral equation}. Assume that $g(t ; y)$  is Lipschitz continuous with respect to both  $t \in \mathbb{R}$  and  $y \in \partial D $. The incident wave field \eqref{eq:ui for moving point source} is generated by a single moving point source with the smooth trajectory $s(t)$ and the Lipschitz continuous signal function $\lambda(t)$. Assume that  $D \subset \mathbb{R}^{3}$ is a Lipschitz domain which is separated from both $\{s(t)\}$ and the measurement surface $\Gamma_m$, $y_0$ is the geometric center of the region $D$, $diam  (D)=   \varepsilon(x) d(x)$  for $ x \in \Gamma_{m} $ and $diam (D)= \varepsilon(s(t))d(s(t))$ for $t\in \mathbb{R}$, where $0<\varepsilon(x)\ll 1 $,  $0<\varepsilon(s(t))\ll 1 $,  $d(x)=\underset{ y \in \partial D}{\min} |x-y|$ and $d(s(t))=\underset{ y \in \partial D}{\min} |s(t)-y|$. \\
\textup{(1)}~Assume that $|v(t)|<c$, we have
\begin{equation*}
    u(x,t)=-C\frac{\lambda(\tilde{t}-c^{-1}|s(\tilde{\tau})-y_0|)}
{4\pi|x-y_0||s(\tilde{\tau})-y_0|\left(1-\frac{v(\tilde{\tau})\cdot(y_0-s(\tilde{\tau}))}{c|y_0-s(\tilde{\tau})|}\right)}+O(\varepsilon(x, t)),  \quad \quad x \in \Gamma_{m},~t \in \mathbb{R},
\end{equation*}
where $\varepsilon(x, t)=\max \left \{ \varepsilon (x),  \varepsilon(s(\tilde{t})),  \varepsilon(s(\tilde{\tau} )) \right \} $, $\tilde{t}=t-c^{-1}|x-y_0|$, $\tilde{\tau}$ satisfies $\tilde{t}=\tilde{\tau}+c^{-1}|y_0-s(\tilde{\tau})|$, and $C$ is a constant depending only on $y$ and $D$. \\
\textup{(2)}~Moreover, if $|v(t)|= \varepsilon(v(t))c$ with $0<\varepsilon(v(t))\ll 1$ for $t\in \mathbb{R}$, we have
\begin{equation*}
    u(x,t)=-C\frac{\lambda(\tilde{t}-c^{-1}|s(\tilde{\tau})-y_0|)}
{4\pi|x-y_0||s(\tilde{\tau})-y_0|}+O(\varepsilon'(x, t)),  \quad \quad x \in \Gamma_{m},~t \in \mathbb{R},
\end{equation*}
where $\varepsilon'(x, t)=\max \left \{ \varepsilon(x, t),  \varepsilon(v(\tilde{\tau})) \right \} .$
\end{Proposition}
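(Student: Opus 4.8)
The plan is to reduce everything to the cited Lemma~\ref{Lemma:approximate solution} and then identify the boundary density $g$ at the relevant retarded time with the incident field at the centre $y_0$. Lemma~\ref{Lemma:approximate solution} already gives, for $x\in\Gamma_m$,
\[
 u(x,t)=\frac{A\,g(\tilde t;y_1)}{4\pi|x-y_0|}+O(\varepsilon(x)),\qquad \tilde t=t-c^{-1}|x-y_0|,
\]
for an arbitrary $y_1\in\partial D$, so the whole task is to evaluate the single number $A\,g(\tilde t;y_1)$. The guiding observation is that, by the defining relation $\tilde t=\tilde\tau+c^{-1}|y_0-s(\tilde\tau)|$, one has $\tilde t-c^{-1}|s(\tilde\tau)-y_0|=\tilde\tau$; comparing with \eqref{eq:ui for moving point source} evaluated at the centre shows that the claimed leading term is nothing but $-\,C\,|x-y_0|^{-1}u^i(y_0,\tilde t)$. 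Thus it suffices to prove $A\,g(\tilde t;y_1)=-4\pi C\,u^i(y_0,\tilde t)+(\text{error})$ for a constant $C$ depending only on $\partial D$.

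To evaluate $g$, I would take the boundary integral equation \eqref{eq:boundary integral equation} at time $\tilde t$ and integrate it over $x\in\partial D$:
\[
 \int_{\partial D}\!\!\int_{\partial D}\frac{g(\tilde t-c^{-1}|x-y|;y)}{4\pi|x-y|}\,\mathrm{d}s_y\,\mathrm{d}s_x=-\int_{\partial D}u^i(x,\tilde t)\,\mathrm{d}s_x.
\]
Since $D$ is small, for $x,y\in\partial D$ the retardation satisfies $c^{-1}|x-y|\le\mathrm{diam}(D)/c$, so the Lipschitz continuity of $g$ in $t$ and in $y$ gives $g(\tilde t-c^{-1}|x-y|;y)=g(\tilde t;y_1)+O(\mathrm{diam}(D))$. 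Pulling the constant $g(\tilde t;y_1)$ out of the double integral, writing $B=\iint_{\partial D\times\partial D}(4\pi|x-y|)^{-1}\,\mathrm{d}s_y\,\mathrm{d}s_x$, and replacing $u^i(x,\tilde t)$ by $u^i(y_0,\tilde t)$ on the right, yields $g(\tilde t;y_1)\,B=-A\,u^i(y_0,\tilde t)+(\text{error})$. Hence $A\,g(\tilde t;y_1)=-(A^2/B)\,u^i(y_0,\tilde t)+(\text{error})$, i.e. $C=A^2/(4\pi B)$, a purely geometric constant; substituting \eqref{eq:ui for moving point source} at $(y_0,\tilde t)$ then produces the stated expression with the double retardation $t\mapsto\tilde t\mapsto\tilde\tau$.

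The main obstacle is the error bookkeeping for the incident field over $\partial D$, which is where the moving source genuinely complicates matters. For $x\in\partial D$ the retarded time $\tau(x)$ solves $\tilde t=\tau(x)+c^{-1}|x-s(\tau(x))|$; differentiating this relation and using $|v|<c$ (which keeps the Doppler factor bounded away from $0$) gives $\tau(x)-\tilde\tau=O(\mathrm{diam}(D)/c)$. Propagating this through the Lipschitz signal $\lambda$, through $s(\tau(x))=s(\tilde\tau)+O(\mathrm{diam}(D))$, and through the amplitude and Doppler factors of \eqref{eq:ui for moving point source}, the relative variation of $u^i$ across $\partial D$ is controlled by $\mathrm{diam}(D)$ divided by the source--scatterer separation, producing exactly the terms $\varepsilon(s(\tilde\tau))$ and $\varepsilon(s(\tilde t))$ under the standing hypotheses $\mathrm{diam}(D)=\varepsilon(s(t))d(s(t))$; together with the $\varepsilon(x)$ from Lemma~\ref{Lemma:approximate solution} and the higher-order errors from freezing $g$, this gives the overall bound $O(\varepsilon(x,t))$ with $\varepsilon(x,t)=\max\{\varepsilon(x),\varepsilon(s(\tilde t)),\varepsilon(s(\tilde\tau))\}$. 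I expect the delicate linearisation of the retarded-time map and the uniform control of the Doppler denominator to be the only genuinely nontrivial points.

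Part (2) is then immediate from Part (1). When $|v(\tilde\tau)|=\varepsilon(v(\tilde\tau))\,c$ with $\varepsilon(v(\tilde\tau))\ll1$, the Doppler denominator obeys $1-\dfrac{v(\tilde\tau)\cdot(y_0-s(\tilde\tau))}{c|y_0-s(\tilde\tau)|}=1+O(\varepsilon(v(\tilde\tau)))$, so its reciprocal equals $1+O(\varepsilon(v(\tilde\tau)))$. Substituting this into the leading term of Part (1) removes the Doppler factor and contributes an extra $O(\varepsilon(v(\tilde\tau)))$, whence the error upgrades to $O(\varepsilon'(x,t))$ with $\varepsilon'(x,t)=\max\{\varepsilon(x,t),\varepsilon(v(\tilde\tau))\}$.
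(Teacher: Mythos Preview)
Your proposal is correct and follows essentially the same strategy as the paper: apply Lemma~\ref{Lemma:approximate solution} to reduce to evaluating $g(\tilde t;y_1)$, then use the boundary integral equation together with the Lipschitz continuity of $g$ and of the incident field to identify $g(\tilde t;y_1)$ with $-\,\mathrm{const}\cdot u^i(y_0,\tilde t)$; Part~(2) is in both cases the trivial observation that the Doppler factor is $1+O(\varepsilon(v(\tilde\tau)))$.

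The one genuine, though minor, difference is how you extract the constant. The paper evaluates \eqref{eq:boundary integral equation} at a \emph{single} boundary point $y\in\partial D$, freezes $g$, and obtains $g(t;y_1)\,E=-u^i(y_0,t)+O(\varepsilon_1)$ with $E=E(y)=\int_{\partial D}(4\pi|y-\tilde y|)^{-1}\,\mathrm{d}s_{\tilde y}$, hence $C=A/E$; this is why the statement allows $C$ to depend on $y$. You instead \emph{integrate} \eqref{eq:boundary integral equation} over $\partial D$ and work with the symmetric double integral $B=\iint(4\pi|x-y|)^{-1}$, arriving at $C=A^2/(4\pi B)$, a constant depending only on $D$. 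Both routes are valid and of comparable length; yours has the small advantage of a $y$-independent $C$, while the paper's is slightly more direct and also takes the trouble to verify via a local Lipschitz chart that the weakly singular integral $E$ is finite, a point you treat as understood. Your discussion of the linearisation of the retarded-time map $x\mapsto\tau(x)$ under the subsonic hypothesis $|v|<c$ is in fact more explicit than the paper's, which simply records the resulting approximations without derivation.
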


\begin{proof}
\textup{(1)}~For the geometric center $y_{0}$ of the scatterer $D$  and any $y_{1} \in \partial D$, Lemma \ref{Lemma:approximate solution} implies that the solution to the forward scattering problem \eqref{eq:problem1}-\eqref{eq:problem3} is
\begin{equation*}
    u(x, t)=\frac{Ag(t-c^{-1}|x-y_0|;y_1)}{4\pi|x-y_0|}+O(\varepsilon(x)), \quad \quad x \in \Gamma_{m}, t \in \mathbb{R},
\end{equation*}
where $A$ is the area of  $\partial D $, and  $g(t ; \tilde{y}) $ is the solution of the boundary integral equation
\begin{equation}
\int_{\partial D} \frac{g\left(t-c^{-1}\left|y-\tilde{y}\right|; \tilde{y}\right)}{4 \pi\left|y-\tilde{y}\right|} \mathrm{d} s_{\tilde{y}}
=-\frac{\lambda(t-c^{-1}|y-s(\tau)|)}
{4\pi|y-s(\tau)|\left(1-\frac{v(\tau)\cdot(y-s(\tau))}{c|y-s(\tau)|}\right)}.
\label{eq:BIE apply}
\end{equation}

For any  $y$, $\tilde{y}$, $y_{1} \in \partial D$, the fact that $g(t ; \tilde{y}) $ is Lipschitz continuous with respect to both  $t \in \mathbb{R}$  and  $\tilde{y} \in \partial D $ implies
\begin{equation*}
\begin{aligned}
|g(t-c^{-1}|y-\tilde{y}|; \tilde{y}) - g(t; y_{1})| &\leq |g(t-c^{-1}|y-\tilde{y}|; \tilde{y}) - g(t-c^{-1}|y-\tilde{y}|; y_{1})|\\
& \quad + |g(t-c^{-1}|y-\tilde{y}|; y_{1}) - g(t; y_{1})|  \\
&\leq K_{1}|\tilde{y}-y_{1}|+c^{-1}K_{2}|y-\tilde{y}| \\
&\leq (K_{1}+c^{-1}K_{2})diam(D) 
\end{aligned}
\end{equation*}
for some $K_1, K_2 > 0$, that is
\begin{equation}
    g\left(t-c^{-1}\left|y-\tilde{y}\right| ; \tilde{y}\right)=g\left(t ; y_{1}\right)+O(\varepsilon(s(t))).
\label{g(t;y)}
\end{equation}

From the assumption that $D$ and $\left \{ s(t) \right \} $ are separated, we can get $|y_0-s(\tau)|\ne 0$. Together with the fact that
$$|y-s(\tau )| = |y_0-s(\tau )| + O(\varepsilon(s(\tau )))$$
and $|v(t)|<c$, we have
\begin{equation}
\frac{1}{4\pi|y-s(\tau )|}=\frac{1}{4\pi|y_0-s(\tau)|} +O(\varepsilon(s(\tau ))
\label{approx1}
\end{equation}
and
\begin{equation}
\frac{1}{1-\frac{v(\tau)\cdot(y-s(\tau))}{c|y-s(\tau)|}}=\frac{1}{1-\frac{v(\tau)\cdot(y_0-s(\tau))}{c|y_0-s(\tau)|}}+O(\varepsilon(s(\tau ))).
\label{approx2}
\end{equation}
Combining \eqref{approx1}, \eqref{approx2} with the fact that $\lambda(t)$ is Lipschitz continuous, the following conclusion can be obtained:
\begin{equation*}
    \frac{\lambda(t-c^{-1}|y-s(\tau)|)}
{4\pi|y-s(\tau)|\left(1-\frac{v(\tau)\cdot(y-s(\tau))}{c|y-s(\tau)|}\right)}=\frac{\lambda(t-c^{-1}|y_0-s(\tau)|)}
{4\pi|y_0-s(\tau)|\left(1-\frac{v(\tau)\cdot(y_0-s(\tau))}{c|y_0-s(\tau)|}\right)}+O(\varepsilon(s(\tau ))).
\end{equation*}

Furthermore, combining the above equation with equation  \eqref{g(t;y)}, the boundary integral equation \eqref{eq:BIE apply} can be deduced to
\begin{equation}
g\left(t ; y_{1}\right) \int_{\partial D} \frac{1}{4 \pi\left|y-\tilde{y}\right|} \mathrm{d} s_{\tilde{y}} 
 = -\frac{\lambda\left(t-c^{-1}\left|y_{0}-s(\tau)\right|\right)}{4 \pi\left|y_{0}-s(\tau)\right|\left(1-\frac{v(\tau)\cdot(y_0-s(\tau))}{c|y_0-s(\tau)|}\right)}+O(\varepsilon_1(t)),   
\label{g(t;y_1)}
\end{equation}
where $\varepsilon_1(t)=\max \left \{ \varepsilon (s(t)),  \varepsilon(s(\tau )) \right \} $. It is worth mention that $\tau$ is unique determined by $t$ with $t=\tau+c^{-1}|y_0-s(\tau)|$, and $\varepsilon_1(t)$ depends only on $t$ \cite{algebraic moving ui}.

Since $D$ is a Lipschitz domain, for any $y\in \partial D$, there exists a spherical neighborhood $B_\rho(y)$, and a bijection $F_y: B_\rho(y)\to B_1(0):=\left \{x\in \mathbb{R}^3: |x|\leq 1 \right \}$ satisfies
\begin{equation*}
    F_y(\partial D\cap B_\rho(y))= P_0,
\end{equation*}
where $P_0=\left \{x=(x_1, x_2, x_3)\in B_1(0): x_3=0 \right \} $, and that $F_y$, $F_y^{-1}$ are Lipschitz continuous with the Lipschitz constants $L_1$ and $L_2$, respectively. Then, for any $\tilde{y}\in \partial D\cap B_\rho(y)$, denote $x=F_y(y)$, $\tilde{x}=F_y(\tilde{y})$, it holds that
\begin{equation*}
    \frac{1}{L_1}|x-\tilde{x}|\le |y-\tilde{y}|\le L_2|x-\tilde{x}|.
\end{equation*}
Therefore, given that the area of $\partial D$ is $A$, one can verify that
\begin{equation*}
\begin{aligned}
\int_{\partial D} \frac{1}{4 \pi\left|y-\tilde{y}\right|} \mathrm{d} s_{\tilde{y}}&=\int_{\partial D\cap B_\rho(y)} \frac{1}{4 \pi\left|y-\tilde{y}\right|} \mathrm{d} s_{\tilde{y}} + \int_{\partial D \setminus  B_\rho(y)} \frac{1}{4 \pi\left|y-\tilde{y}\right|} \mathrm{d} s_{\tilde{y}}\\
&\leq \int_{P_0} \frac{L_1 L_2^2}{4 \pi\left|x-\tilde{x}\right|} \mathrm{d} s_{\tilde{x}} + \frac{A}{4\pi \rho}\\
&= \frac{L_1 L_2^2}{4 \pi}\int_{P_0} \frac{1}{\left|x-\tilde{x}\right|} \mathrm{d} s_{\tilde{x}} + \frac{A}{4\pi \rho}\\
&\leq \frac{L_1 L_2^2}{2} + \frac{A}{4\pi \rho}.
\end{aligned}
\end{equation*}
The above equation implies that the generalized integral converges. Therefore,
there exists a constant  $E$  depending only on  $y$  and  $D$  such that
\begin{equation*}
    \int_{\partial D} \frac{1}{4 \pi\left|y-\tilde{y}\right|} \mathrm{d} s_{\tilde{y}}=E.
\end{equation*}

Substituting the above results into \eqref{g(t;y_1)}, a straightforward calculation implies
$$g (t;y_1)=-\frac{1}{E}\frac{\lambda(t-c^{-1}|y_0-s(\tau)|)}
{4\pi|y_0-s(\tau)|\left(1-\frac{v(\tau)\cdot(y_0-s(\tau))}{c|y_0-s(\tau)|}\right)}+\frac{1}{E}O( \varepsilon_1(t)).$$
Then we obtain
\begin{equation*}
\begin{aligned}
u(x,t)&=\frac{Ag(t-c^{-1}|x-y_0|;y_1)}{4\pi|x-y_0|}+O(\varepsilon(x))\\
&=-C\frac{\lambda(\tilde{t}-c^{-1}|y_0-s(\tilde{\tau})|)}
{4\pi|x-y_0||y_0-s(\tilde{\tau})|\left(1-\frac{v(\tilde{\tau})\cdot(y_0-s(\tilde{\tau}))}{c|y_0-s(\tilde{\tau})|}\right)}+O(\varepsilon(x, t)),
\end{aligned}  
\end{equation*}
where $C=\frac{A}{E} $, $\tilde{t}=t-c^{-1}|x-y_0|$, $\tilde{\tau}$ satisfies $\tilde{t}=\tilde{\tau}+c^{-1}|y_0-s(\tilde{\tau})|$, $\varepsilon(x, t)=\max \left \{ \varepsilon (x),  \varepsilon_1(\tilde{t}) \right \} =\max \left \{ \varepsilon (x),  \varepsilon(s(\tilde{t})), \varepsilon(s(\tilde{\tau} )) \right \}  $.

\textup{(2)}~If $|v(t)|= \varepsilon(v(t))c$, we have
\begin{equation*}
    1-\frac{v(\tilde{\tau})\cdot(y_0-s(\tilde{\tau}))}{c|y_0-s(\tilde{\tau})|}=1+O(\varepsilon(v(\tilde{\tau}))).
\end{equation*}
Then a direct computation implies the conclusion.
\qed
\end{proof}

\section{ Direct sampling method for inverse scattering }

The inverse scattering problem in consideration is as follows: reconstruct the scatterer from the time-dependent wave field data 
\begin{equation}\label{scattered_data}
\left \{ u(x,t): x\in \Gamma_m, ~t\in [0,T^{tot}] \right \},
\end{equation}
where $T^{tot}$ is terminal time.

Let $D_z$ represent the sampling region. If $|v(t)|<c$, for a sampling point $z\in D_z$, define
\begin{equation*}
U(x,t;z)=-\frac{\lambda(\tilde{t}-c^{-1}|s(\tilde{\tau})-z|)}
{4\pi|x-z||s(\tilde{\tau})-z|\left(1-\frac{v(\tilde{\tau})\cdot(z-s(\tilde{\tau}))}{c|z-s(\tilde{\tau})|}\right)},
\end{equation*}
where $\tilde{t}=t-c^{-1}|x-z|$, $\tilde{\tau}$ satisfies $\tilde{t}=\tilde{\tau}+c^{-1}|z-s(\tilde{\tau})|$. According to Proposition ~\ref{Proposition: approximate solution}, when the sampling point $z$ is the geometric center of the scatterer $D$, the approximate solution to the corresponding forward scattering problem is $CU(x,t;z)$.

Denote the inner product in $L^2(\Gamma_m\times \mathbb{R})$ by
\begin{equation*}
    \langle u(x,t),v(x,t) \rangle=\int_{\mathbb{R}}\int_{\Gamma_m}u(x,t)\overline{v(x,t)}\mathrm{d}s_x\mathrm{d}t
\end{equation*}
and the norm in $L^2(\Gamma_m\times \mathbb{R})$ by
\begin{equation*}
    \left \| u(x,t) \right \| = \langle u(x,t), u(x,t) \rangle^{\frac{1}{2}}.
\end{equation*}
Considering the approximate solution $CU(x,t;z)$, a basic idea is to construct an indicator function
\begin{equation}\label{I_1}
I_{1}(z)= \frac{\left|\langle u(x,t), U(x,t;z)\rangle \right|}{||u(x,t)|| \cdot ||U(x,t;z)||} , \quad \quad z\in D_z.
\end{equation}
The analysis of the validity of the indicator function \eqref{I_1} is classic and similar analysis can be seen in the cited references such as \cite{CB point-like scatterer}. 

Apart from the indicator function \eqref{I_1}, we want to construct a more concise and effective indicator function to reconstruct the scatterers.

Define a new indicator function
\begin{equation}
    I_2(z)=\left \| \int_{\Gamma _{m} } \sqrt{|x-y_0|} \cdot(u*G_z)(x,t)\mathrm{d}s_x  \right \|^2_{L^{
2}(\mathbb{R})},\quad\quad z\in D_z,
\label{I_2}
\end{equation}
where $y_0$ is the geometric center of the scatterer $D$, $u$ is the solution to the forward scattering problem \eqref{eq:problem1}-\eqref{eq:problem3}, and 
\begin{equation*}
    G_z(x,t)=\frac{\lambda(t+c^{-1}|x-z|)}{4\pi\sqrt{|x-z|}}.
\end{equation*}

For the analysis of the indicator function, we recall the Fourier transform
\begin{equation*}
\mathscr{F}[f](\xi)=\int_{\mathbb{R}}f(t) \mathrm{e}^{-i\xi  t} \mathrm{d}t, \quad \quad \xi \in \mathbb{R}
\end{equation*}
and the inverse Fourier transform
\begin{equation*}
\mathscr{F}^{-1}[f](t)=\frac{1}{2\pi }\int_{\mathbb{R}}f(\xi) \mathrm{e}^{i\xi  t} \mathrm{d}\xi,\quad \quad t \in \mathbb{R}.
\end{equation*}

\begin{Lemma}\label{lemma_integral} 
Denote by $\Gamma_m$ a spherical surface with the center at the origin and a radius of $R$. For any $z\in\mathbb{R}^3$ with $|z|\le R$, we have
\begin{equation*}
\int_{\Gamma_m}\frac{1}{|x-z|} \mathrm{d}s_x= 4\pi R.
\end{equation*}
\end{Lemma}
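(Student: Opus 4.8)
The plan is to exploit the rotational symmetry of the sphere and reduce the surface integral to an elementary one-dimensional integral. Since $\Gamma_m$ is centered at the origin, the value of $\int_{\Gamma_m}|x-z|^{-1}\mathrm{d}s_x$ depends only on the distance $a:=|z|$ from $z$ to the center, so without loss of generality I place $z=(0,0,a)$ on the polar axis with $0\le a\le R$.

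Next I parametrize $\Gamma_m$ in spherical coordinates, writing $x=R(\sin\theta\cos\phi,\sin\theta\sin\phi,\cos\theta)$ so that $\mathrm{d}s_x=R^2\sin\theta\,\mathrm{d}\theta\,\mathrm{d}\phi$ and $|x-z|^2=R^2+a^2-2Ra\cos\theta$. The integrand is independent of $\phi$, so integrating out $\phi$ contributes a factor $2\pi$, and the substitution $u=\cos\theta$ turns the remaining integral into
\begin{equation*}
\int_{\Gamma_m}\frac{1}{|x-z|}\,\mathrm{d}s_x=2\pi R^2\int_{-1}^{1}\frac{\mathrm{d}u}{\sqrt{R^2+a^2-2Ra\,u}}.
\end{equation*}
Then I evaluate this directly: an antiderivative of $(R^2+a^2-2Ra\,u)^{-1/2}$ with respect to $u$ is $-(Ra)^{-1}\sqrt{R^2+a^2-2Ra\,u}$, so inserting the endpoints $u=\pm1$ yields $(Ra)^{-1}\bigl(\sqrt{(R+a)^2}-\sqrt{(R-a)^2}\bigr)$.

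The one point that requires care---and the only place where the hypothesis $|z|\le R$ enters---is the simplification of these square roots. Because $0\le a\le R$, we have $\sqrt{(R-a)^2}=R-a$ rather than $a-R$, while $\sqrt{(R+a)^2}=R+a$, so the bracket equals $(R+a)-(R-a)=2a$ and the one-dimensional integral collapses to $(Ra)^{-1}\cdot 2a=2/R$, remarkably independent of $a$. Multiplying by the prefactor $2\pi R^2$ then gives $4\pi R$, as claimed. I expect this absolute-value step to be the crux: it is precisely Newton's shell theorem, and had $z$ lain strictly outside the sphere the same computation would instead produce $4\pi R^2/a$, which makes clear that the bound $|z|\le R$ is exactly what forces the answer to be the position-independent constant $4\pi R$.
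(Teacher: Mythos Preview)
Your argument is correct and essentially identical to the paper's: both reduce by rotational symmetry to $z$ on the polar axis, parametrize in spherical coordinates, integrate out the azimuth, and evaluate the resulting one-dimensional integral, using $|z|\le R$ to simplify $\sqrt{(R-a)^2}=R-a$. The only cosmetic difference is that the paper treats the case $z=0$ separately before dividing by $a=|z|$, whereas your antiderivative step tacitly assumes $a>0$; you may wish to note that the $a=0$ case is immediate since the integrand is then the constant $1/R$.
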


\begin{proof}
When $z=(0,0,0)\in \mathbb{R}^3$, the conclusion is obvious.

When $0<|z|\leq R $, there exists a rotation matrix $Q$ such that $Qz=(0,0,|z|)$. Since $|x-z|=|Qx-Qz|$ and the rotation preserves the area element $\mathrm{d}s_x=\mathrm{d}s_{Qx}$, we can get
\begin{equation*}
    \int_{\Gamma_m}\frac{1}{|x-z|} \mathrm{d}s_x=\int_{\Gamma_m}\frac{1}{|Qx-Qz|} \mathrm{d}s_{Qx}.
\end{equation*}
Thus, it is reasonable to assume that $z=(0,0,|z|)$. Parameterize  $x\in\Gamma_m$ as
\begin{equation*}
    x=(R\sin\varphi \cos\theta, R\sin\varphi \sin\theta, R\cos\varphi ) , \quad \quad \varphi \in [0,\pi],  \theta\in [0,2\pi].
\end{equation*}
Then we have $\mathrm{d}s_x=R^2 \sin\varphi\mathrm{d}\varphi \mathrm{d}\theta$ and
\begin{equation*}\begin{aligned}
\int_{\Gamma_m}\frac{1}{|x-z|} \mathrm{d}s_x&=\int_{0}^{2\pi} \int_{0}^{\pi} \frac{1}{\sqrt{|z|^2-2|z|R \cos\varphi+R^2 }}\cdot R^2 \sin\varphi \mathrm{d}\varphi \mathrm{d}\theta\\
&=-2\pi R^2 \int_{0}^{\pi} \frac{1}{\sqrt{|z|^2-2|z|R \cos\varphi+R^2 }}  \mathrm{d}\cos\varphi\\
&= \frac{2\pi R }{|z|}\left[\sqrt{(|z|+R)^2}-\sqrt{(|z|-R)^2} \right]\\
&=4\pi R.
\end{aligned}\end{equation*}
\qed
\end{proof}

\begin{Theorem}
Assume that the point-like scatterer $D$ is expressed as $D=\left \{ y_0 +\gamma  \eta~|~\eta \in D_0 \right \}$, where $y_0$ is the geometric center of $D$, $0<\gamma \ll 1$ is a parameter indicating the size of $D$, and $D_0$ is a region with the geometric center $(0,0,0)$ and the diameter $d(D_0):=\max\limits_{x,y\in D_0}|x-y|=1$.
The sampling region is chosen as $D_z$ which satisfies $D_z \supset \overline{D}$. The measurement surface $\Gamma_m$ is chosen as a spherical surface with the center at the origin and a radius of $R$, which contains $D_z$ in it. The signal function is chosen as $\lambda(t)=\delta(t)$. Then the indicator function defined by \eqref{I_2} satisfies
\begin{equation*}
    I_2(z)\leq \frac{\gamma ^4 R^2}{16\pi^2}\int_\mathbb{R}  \left|\int_{\partial D_0} \hat{g}(\xi;y_0+\gamma \eta ) \mathrm{d}s_{\eta} \right|^{2}    \mathrm{d}\xi \cdot [1+O(\gamma)],\quad\quad z\in D_z,
\end{equation*}
where $\hat{g}=\mathscr{F}[g]$, and $g$ is the solution to the boundary integral equation \eqref{eq:boundary integral equation}. Moreover, in the above equation, the equality sign holds when $z = y_0$.
\end{Theorem}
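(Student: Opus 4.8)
\emph{Proposal.} The plan is to collapse the time convolution explicitly using $\lambda=\delta$, rewrite the inner integral in \eqref{I_2} via Kirchhoff's formula, pass to the frequency domain by Plancherel's identity, and then isolate the $z$-dependence in a single oscillatory surface integral over $\Gamma_m$ that is controlled by Lemma~\ref{lemma_integral}. First I would evaluate $(u*G_z)(x,t)$: since $\lambda=\delta$ the kernel is $G_z(x,t)=\delta(t+c^{-1}|x-z|)/(4\pi\sqrt{|x-z|})$, so the delta collapses the time integral and gives $(u*G_z)(x,t)=u(x,t+c^{-1}|x-z|)/(4\pi\sqrt{|x-z|})$. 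Substituting the Kirchhoff representation \eqref{Kirchhoff} for $u$, the inner integral $F(t)$ in \eqref{I_2} becomes
\[
F(t)=\frac{1}{16\pi^{2}}\int_{\Gamma_m}\frac{\sqrt{|x-y_0|}}{\sqrt{|x-z|}}\int_{\partial D}\frac{g\!\left(t+c^{-1}(|x-z|-|x-y|);y\right)}{|x-y|}\,\mathrm{d}s_y\,\mathrm{d}s_x,
\]
in which the entire $z$-dependence sits in the retarded-time shift $c^{-1}(|x-z|-|x-y|)$.

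Next I would take the Fourier transform in $t$ and invoke Plancherel's identity to rewrite $I_2(z)=\|F\|_{L^2(\mathbb{R})}^2$ as a frequency integral of $|\hat F(\xi)|^2$. The time shift becomes the phase $e^{i\xi c^{-1}(|x-z|-|x-y|)}$ multiplying $\hat g(\xi;y)$. Exploiting the point-like structure $D=\{y_0+\gamma\eta:\eta\in D_0\}$, I would change variables $y=y_0+\gamma\eta$ (so $\mathrm{d}s_y=\gamma^2\mathrm{d}s_\eta$) and use $|x-y|=|x-y_0|+O(\gamma)$. To leading order in $\gamma$ this decouples the $x$-integral from the $\eta$-integral, yielding $\hat F(\xi)=\frac{\gamma^2}{16\pi^2}\,J(\xi,z)\int_{\partial D_0}\hat g(\xi;y_0+\gamma\eta)\,\mathrm{d}s_\eta\cdot[1+O(\gamma)]$, where
\[
J(\xi,z)=\int_{\Gamma_m}\frac{e^{i\xi c^{-1}(|x-z|-|x-y_0|)}}{\sqrt{|x-z|}\,\sqrt{|x-y_0|}}\,\mathrm{d}s_x.
\]

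The heart of the argument is the uniform bound $|J(\xi,z)|\le 4\pi R$, with equality precisely at $z=y_0$. I would first discard the phase by the triangle inequality, $|J(\xi,z)|\le\int_{\Gamma_m}(|x-z|\,|x-y_0|)^{-1/2}\mathrm{d}s_x$, and then apply Cauchy--Schwarz to split this as $\big(\int_{\Gamma_m}|x-z|^{-1}\mathrm{d}s_x\big)^{1/2}\big(\int_{\Gamma_m}|x-y_0|^{-1}\mathrm{d}s_x\big)^{1/2}$. Since $z,y_0\in D_z$ lie inside the sphere of radius $R$, Lemma~\ref{lemma_integral} evaluates both factors to $4\pi R$, giving $|J(\xi,z)|\le 4\pi R$. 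When $z=y_0$ the phase vanishes identically and the two Cauchy--Schwarz factors coincide, so both inequalities saturate and $J(\xi,y_0)=4\pi R$ for every $\xi$. Inserting this into the Plancherel expression for $I_2(z)$ reproduces the claimed upper bound together with the equality at $z=y_0$.

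The step I expect to be the main obstacle is controlling the phase error uniformly in $\xi$ so that it may legitimately be absorbed into the factor $[1+O(\gamma)]$. Replacing $|x-y|$ by $|x-y_0|$ inside the exponential produces a phase error of size $\xi c^{-1}O(\gamma)$, which is small pointwise but not uniformly in $\xi$; after the $\xi$-integration it is harmless only if the frequency content of $g$ is suitably controlled, i.e.\ if $\hat g(\xi;\cdot)$ decays fast enough that $\int_{\mathbb{R}}\xi^2|\hat g(\xi;\cdot)|^2\,\mathrm{d}\xi$ is comparable to $\int_{\mathbb{R}}|\hat g(\xi;\cdot)|^2\,\mathrm{d}\xi$. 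Establishing this from the time-regularity of $g$ inherited from the direct problem \eqref{eq:boundary integral equation} is the technical crux; once it is in hand, the spatial expansions and the Cauchy--Schwarz/Lemma~\ref{lemma_integral} mechanism are routine.
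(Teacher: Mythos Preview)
Your proposal is correct and follows essentially the same route as the paper: pass to the frequency domain via Plancherel and the convolution theorem, insert the Kirchhoff representation, perform the small-$\gamma$ expansion $y=y_0+\gamma\eta$, and then bound the resulting oscillatory surface integral (your $J(\xi,z)$, the paper's inner integral in $H(z,\xi)$) by Cauchy--Schwarz together with Lemma~\ref{lemma_integral}, with equality at $z=y_0$. The $\xi$-uniformity issue you flag for the $O(\gamma)$ phase error is treated in the paper at the same formal level you describe, so your concern is apt but does not distinguish your argument from the paper's.
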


\begin{proof}
Combining the convolution theorem with the Plancherel theorem, we have
\begin{equation*}
    \begin{aligned}
I_2(z)&=\left \|\mathscr{F}\left[ \int_{\Gamma _{m}} \sqrt{|x-y_0|} \cdot(u*G_z)(x,t) \mathrm{d}s_x\right]  \right \|_{L^{
2}(\mathbb{R})}^{2}\\
&=\left \| \int_{\Gamma _{m}} \sqrt{|x-y_0|}\cdot\hat{u}(x,\xi) \cdot\hat{G_z}(x,\xi)]\mathrm{d}s_x  \right \|_{L^{
2}(\mathbb{R})}^{2},\\
\end{aligned}
\end{equation*}
where $\hat{u}=\mathscr{F}[u]$, $\hat{G_z}=\mathscr{F}[G_z]$. When the signal function is chosen as $\lambda (t)=\delta (t) $, we have
\begin{equation*}
\begin{aligned}
I_2(z)&=\left \| \int_{\Gamma _{m} } \sqrt{|x-y_0|}\cdot\hat{u}(x,\xi)\cdot \frac{\mathrm{e}^{i\xi  c^{-1}|x-z|}}{4\pi \sqrt{|x-z|}} \mathrm{d}s_x \right \|_{L^{2}(\mathbb{R})}^{2}\\
&= \int_\mathbb{R} \left| \int_{\Gamma _{m} }  \frac{\sqrt{|x-y_0|}\cdot \hat{u} (x,\xi)\cdot \mathrm{e}^{i\xi  c^{-1}|x-z|}} {4\pi \sqrt{|x-z|}} \mathrm{d}s_x \right|^{2} \mathrm{d}\xi=: \int_\mathbb{R} \left| H(z, \xi) \right|^{2} \mathrm{d}\xi.\\
\end{aligned}
\end{equation*}

Denote $\hat{G}=\mathscr{F}[G]$, $\hat{g}=\mathscr{F}[g]$, $\hat{u^i}=\mathscr{F}[u^i]$. Taking into account the boundary integral equation method \eqref{Kirchhoff}-\eqref{eq:boundary integral equation}, $\hat{u}(x,\xi)$ can be expressed as
\begin{equation}
\hat{u}(x,\xi)=\int_{\partial D}\hat{G}(x,\xi;y)\hat{g}(\xi;y) \mathrm{d}s_y, \quad \quad x\in \mathbb{R}^{3}\setminus \overline{D},  \
\label{usBDE}
\end{equation}
where the density function $\hat{g}(\xi;y)$ satisfies the boundary integral equation
\begin{equation*}
\int_{\partial D }\hat{G}(x,\xi;y) \hat{g}(\xi;y) \mathrm{d}s_y=-\hat{u^{i}}(x,\xi), \quad \quad x\in \partial D.  
\end{equation*}

For $y\in\partial D$, since $y=y_0+\gamma  \eta$, we have $\mathrm{d}s_y=\gamma ^2 \mathrm{d}s_\eta $. Substituting $\hat{G}(x,\xi;y)=\frac{\mathrm{e}^{-i\xi c^{-1}|x-y| } }{4\pi |x-y|}$ into \eqref{usBDE} implies
\begin{equation*}
\begin{aligned}
  \hat{u}(x,\xi)&=\int_{\partial D}\frac{\mathrm{e}^{-i\xi  c^{-1}|x-y|}}{4\pi |x-y| }\hat{g}(\xi;y) \mathrm{d}s_y   \\
  &=\int_{\partial D_0}\frac{ \mathrm{e}^{-i\xi c^{-1}|x-(y_0+\gamma  \eta )|}}{4\pi |x-(y_0+\gamma  \eta )| }\hat{g}(\xi;y_0+\gamma  \eta ) \gamma ^2\mathrm{d}s_{\eta} \\
  &=\frac{\gamma ^2 \mathrm{e}^{-i\xi c^{-1}|x-y_0|}}{4\pi |x-y_0| } \int_{\partial D_0}\hat{g}(\xi;y_0+\gamma  \eta ) \mathrm{d}s_{\eta} \cdot  [1+O(\gamma)].\\
\end{aligned}
\label{intGg}
\end{equation*}
Then we have
\begin{equation*}
H(z, \xi)=\frac{\gamma ^2}{16\pi^2}\int_{\Gamma _{m} } \frac{ \mathrm{e}^{-i\xi c^{-1}(|x-y_0|-|x-z|)}}{\sqrt{|x-y_0|}\cdot\sqrt{|x-z|}}  \mathrm{d}s_x \cdot  \int_{\partial D_0} \hat{g}(\xi;y_0+\gamma  \eta ) \mathrm{d}s_{\eta} \cdot  [1+O(\gamma)].
\label{daiLemma}
\end{equation*}
Then, combining with Lemma \ref{lemma_integral}, it yields
\begin{equation*}
\begin{aligned}
\left|H(z, \xi)\right|^{2}&=\frac{\gamma ^4}{16^2 \pi^4} \left|\int_{\Gamma _{m} } \frac{ \mathrm{e}^{-i\xi c^{-1}(|x-y_0|-|x-z|)}}{\sqrt{|x-y_0|}\cdot\sqrt{|x-z|}}  \mathrm{d}s_x \right|^{2} \cdot  \left|\int_{\partial D_0} \hat{g}(\xi;y_0+\gamma \eta ) \mathrm{d}s_{\eta} \right|^{2} \cdot  [1+O(\gamma )]\\
&\leq \frac{\gamma ^4}{16^2 \pi^4}  \int_{\Gamma _{m} } \frac{1}{|x-y_0|}  \mathrm{d}s_x  \cdot \int_{\Gamma _{m} } \frac{1}{|x-z|}  \mathrm{d}s_x  \cdot  \left|\int_{\partial D_0} \hat{g}(\xi;y_0+\gamma  \eta ) \mathrm{d}s_{\eta} \right|^{2} \cdot  [1+O(\gamma )]\\
&=\frac{\gamma ^4 R^2}{16\pi^2} \left|\int_{\partial D_0} \hat{g}(\xi;y_0+\gamma \eta ) \mathrm{d}s_{\eta} \right|^{2} \cdot  [1+O(\gamma)].\\
\end{aligned}
\end{equation*}
In particular, the above equality sign holds when $z=y_0$. 
\qed
\end{proof}

In the indicator function $I_2(z)$ defined by \eqref{I_2}, the geometric center $y_0$ of the scatterer $D$ is used. But in practice, $y_0$ remains unknown during computation. Nevertheless, the term $\sqrt{|x-y_0|}$ is independent of the sampling point $z$, so a modified indicator function
\begin{equation*}
    \tilde{I}_2 (z)=\left \| \int_{\Gamma _{m} } (u*G_z)(x,t)\mathrm{d}s_x  \right \|_{L^{
2}(\mathbb{R})}^{2},\quad\quad z\in D_z
\end{equation*}
is used in the numerical computation. 

The direct sampling method is as follows: Choose the sampling region $D_z$ and the sampling points $z\in D_z$. Compute the value of $I_1(z)$ or $\tilde{I}_2 (z)$. Then the reconstruction of the location of the scatterer is given by the region where the values of the indicator function are relatively large. The effectiveness of the proposed method is verified by the numerical experiments in Section 4.

\section{ Numerical Experiments}
In this section, comprehensive numerical examples, in both two-dimensional and three-dimensional spaces, are presented to illustrate the feasibility of the direct sampling methods. Both point-like scatterers and extended scatterers can be effectively reconstructed. In all the experiments, only a single moving emitter is used for the reconstruction. 

The boundary integral equation method is used to solve the forward scattering problem to obtain time-dependent scattered field data \eqref{scattered_data}. Random noises are added by the following formula
\begin{equation*}
    u_{\sigma}=(1+\sigma  r)u,
\end{equation*}
where $\sigma  > 0$ is the noise level and $r$ are uniformly distributed random numbers in $\left [ -1,1 \right ] $.

The sampling points are chosen as $z_l$ in the sampling region $D_z$ for $l=1,2,\cdots,N_z$ with $N_z\in \mathbb{N}^*$. The time discretization is
\begin{equation*}
    t_k=k\Delta t, \quad \Delta t=\frac{ T^{tot} }{N_t},\quad \quad k=0,1,2,\cdots,N_t,
\end{equation*}
where $T^{tot}$ is the terminal time, $N_t\in \mathbb{N}^*$. The measurement points are chosen as $x_i\in \Gamma_m$ with the control area $\Delta s_{x_i}$ for $i=1,2,\cdots,N_m$, where $N_m\in \mathbb{N}^*$.
Then we compute the discretized indicator functions
\begin{equation*}
    I_1(z_l)=\frac{\left |\sum_{k=0}^{N_t-1}  \sum_{i=1}^{N_m} u(x_i,t_{k}) \overline{U(x_i,t_{k};z_l)} \Delta s_{x_i}  \Delta t \right  |}
{\left(\sum_{k=0}^{N_t-1}  \sum_{i=1}^{N_m} |u(x_i,t_{k})|^2\Delta s_{x_i} \Delta t\right)^{1/2}
\left(\sum_{k=0}^{N_t-1}  \sum_{i=1}^{N_m} |U(x_i,t_{k};z_l)|^2\Delta s_{x_i} \Delta t\right)^{1/2} }
\end{equation*}
and
\begin{equation*}
    \tilde{I}_2(z_l)=\sum_{k=0}^{N_t-1} \left | \sum_{i=1}^{N_m} \sum_{j=0}^{k} u(x_i,t_{k-j})G_{z_l}(x_i,t_{j})\Delta t\Delta s_{x_i} \right  |^{2} \Delta t
\end{equation*}
to reconstruct the scatterers.

\subsection{Two-dimensional Experiments}

Consider a two-dimensional region of $[-36,36]\times[-36,36]$ as the sampling region. The sampling points are chosen as $36\times 36$ uniform discrete points in the sampling region. The measurement points $x$ are located on a circle centered at the origin with a radius of $R_m=72$. The sound speed is $c=340$. The moving point source follows a circular trajectory with a radius $R_S=60$, starting at the position $(60,0)$ and moving counterclockwise. Let $\omega $ be the angular velocity of the moving point source. Then, the trajectory of the moving point source in $\mathbb{R}^{2}$ is given by
\begin{equation*}
    s(t)=R_S(\cos \omega t, \sin \omega t).
\end{equation*}

Assume that the moving point source rotates for one full circle during the time interval of $[0, T]$, in which $ T=14$. Then the angular speed of the moving emitter is $\omega_0=\frac{2\pi}{T} $. The signal function is selected as a periodic function $\lambda_N(t)$ with a period of $\frac{ T }{N}$, where $N\in \mathbb{N}^*$ is the number of periods. The expression of $\lambda_N(t)$ within $[0,\frac{ T }{N}]$ is 
\begin{equation*}
    \lambda_N(t)=\sin(10N t)\mathrm{e}^{-15N^2(t-\frac{ T }{3N} )^2}, \quad \quad  t\in [0,\frac{ T }{N}].
\end{equation*}
The signal functions $\lambda_N(t)$ with $N=1$, $3$ and $10$ are shown in Figure \ref{incident pulse}. The time discretization is accomplished with $N_t=256N$.

\begin{figure}[!htbp]
		\centering
		\begin{tabular}{ccc}
			\includegraphics[width=0.3\textwidth]{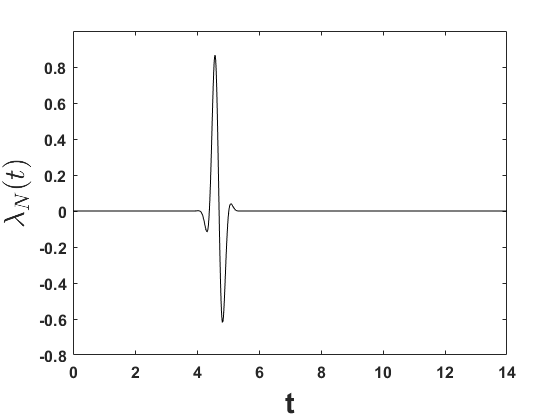}
             & \includegraphics[width=0.3\textwidth]{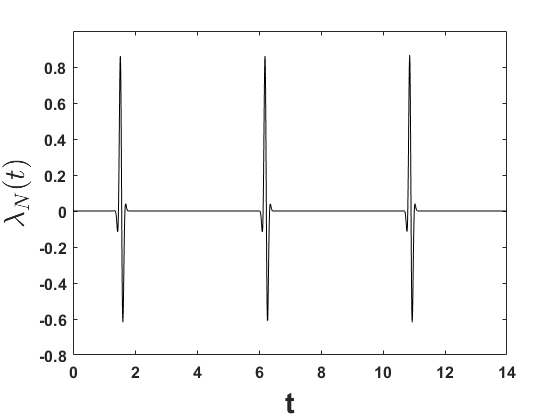}
            & \includegraphics[width=0.3\textwidth]{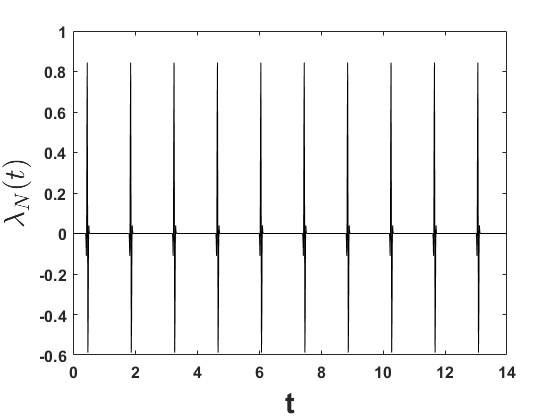}\\
            (a)~$N=1$ & (b)~ $N=3$ & (c)~$N=10$ \\
		\end{tabular}
\caption{The signal function $ \lambda_N(t)$.}
\label{incident pulse}
\end{figure}

\subsubsection*{Example 1: Comparison of $I_1(z)$ and $\tilde{I}_2 (z)$ with different signal functions}

The parameterized boundary of a circular scatterer centered at $(a,b)$ is chosen as
\begin{equation}\label{ball_shaped}
(a+R_b\cos\theta, b+R_b\sin\theta ),   
\end{equation}
where $R_b$ is the radius of the circular scatterer, $\theta  \in [0,2\pi]$. For all the point-like circular scatterers in our experiments, we choose $R_b=0.01$.

In this example, both the reconstructions of multiple point-like scatterers and an extended scatterer are considered. The indicator functions are chosen as $I_1(z)$ and $\tilde{I}_2(z)$. The numbers of periods are chosen as $N=1$, $3$ and $10$. The reconstructions of three point-like scatterers which are centered at $(-24,-24)$, $(0,20)$ and $(15,-10)$ are shown in Figure \ref{point-like scatterer with N=1,3,10}. 
Figure \ref{extended scatterer with N=1,3,10} shows the reconstructions of a circular scatterer which is centered at $(0,0)$ with a radius of $R_b=10$.

In all the figures in this section, the thick curve represents the moving path of the moving emitter, and the black asterisks indicate the receivers. The green asterisks indicate the actual positions of the point-like scatterers, and the dotted lines represent the boundaries of the extended scatterers.

As shown in  Figure \ref{point-like scatterer with N=1,3,10} and
Figure \ref{extended scatterer with N=1,3,10}, the reconstruction effect improves as $N$ increases. When we choose $N=10$, both $I_1(z)$ and $\tilde{I}_2(z)$ provide effective reconstructions, and the indicator function $\tilde{I}_2(z)$ shows better performances.

\begin{figure}[!htbp]
		\centering
		\begin{tabular}{ccc}
            \includegraphics[width=0.3\textwidth]{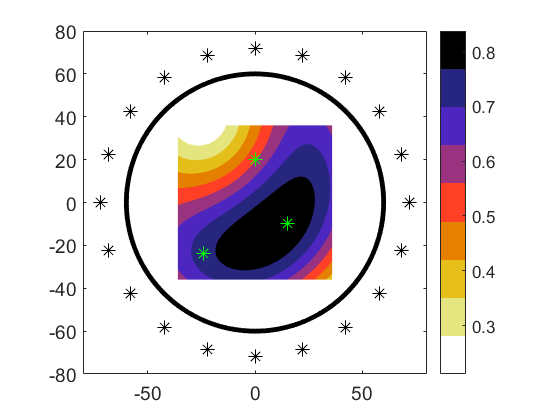}
            & \includegraphics[width=0.3\textwidth]{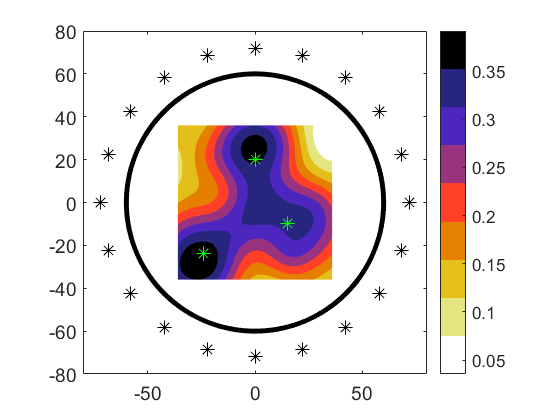}
            & \includegraphics[width=0.3\textwidth]{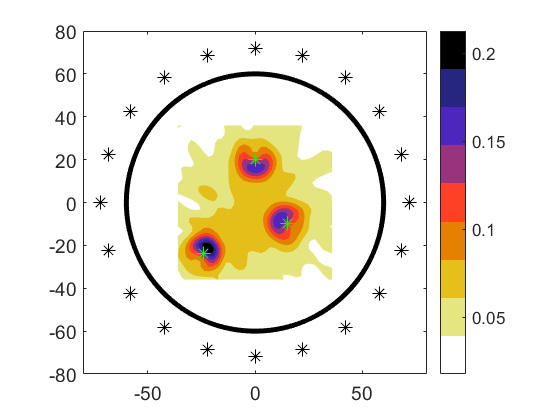}\\
            (a)~$I_1$, $N=1$ & (b)~$I_1$, $N=3$ & (c)~$I_1$, $N=10$ \\
            \includegraphics[width=0.3\textwidth]{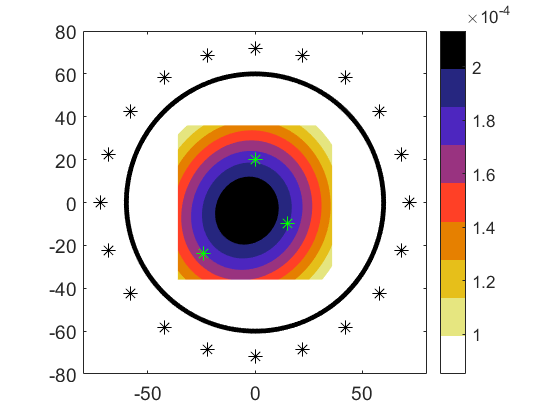}
            & \includegraphics[width=0.3\textwidth]{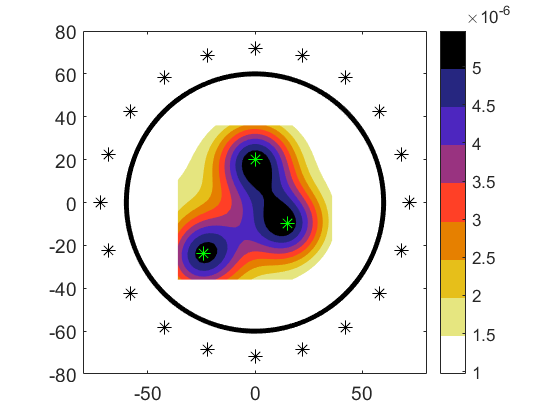}
            & \includegraphics[width=0.3\textwidth]{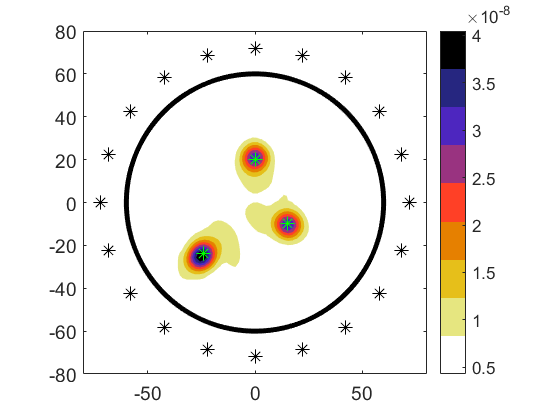}\\
           (d)~$\tilde{I}_2 (z)$, $N=1$ & (e)~$\tilde{I}_2 (z)$, $N=3$ & (f)~$\tilde{I}_2 (z)$, $N=10$ \\         
		\end{tabular}
\caption{The inversion performance of the indicator functions $I_1(z)$ and $\tilde{I}_2(z)$ for multiple point-like scatterers with the signal function $\lambda_N (t) $ under different numbers of periods. The noise level is $\sigma=5\%$.}
\label{point-like scatterer with N=1,3,10}
\end{figure}

\begin{figure}[!htbp]
		\centering
		\begin{tabular}{ccc}
            \includegraphics[width=0.3\textwidth]{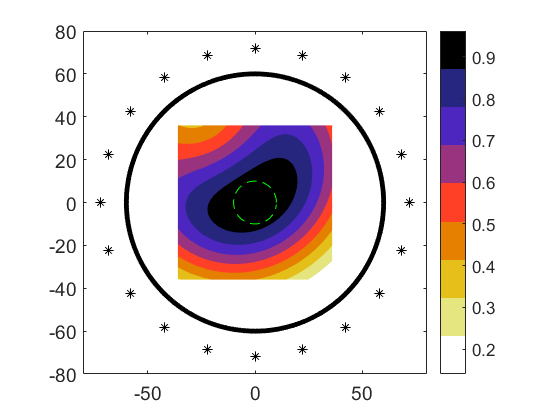}
            & \includegraphics[width=0.3\textwidth]{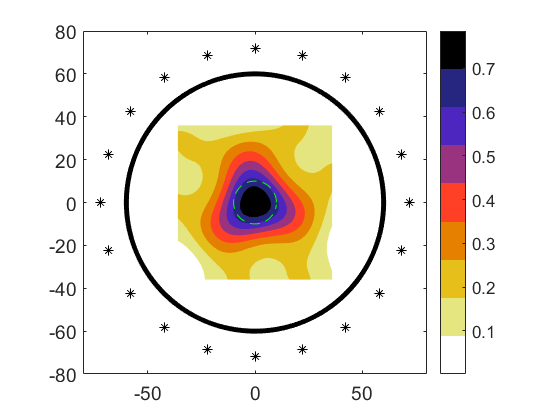}
            & \includegraphics[width=0.3\textwidth]{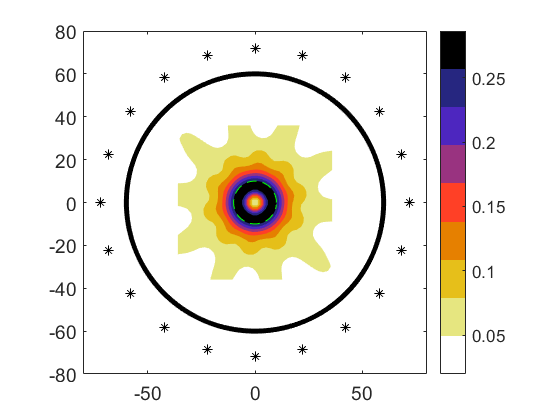}\\
            (a)~$I_1$, $N=1$ & (b)~$I_1$, $N=3$ & (c)~$I_1$, $N=10$ \\
           \includegraphics[width=0.3\textwidth]{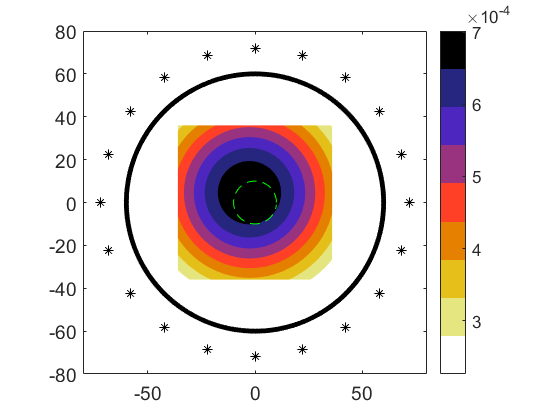}
            & \includegraphics[width=0.3\textwidth]{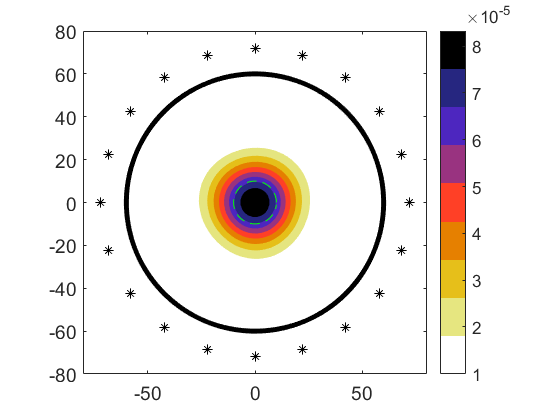}
            & \includegraphics[width=0.3\textwidth]{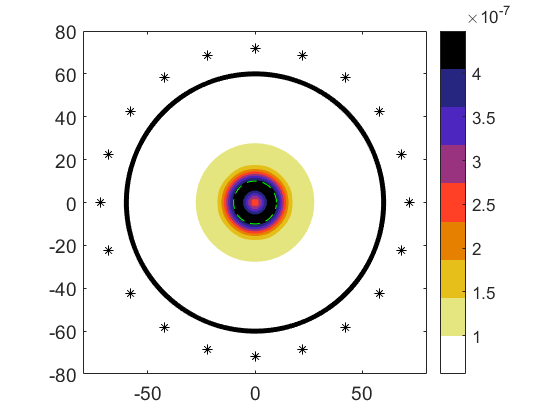}\\   
         (d)~$\tilde{I}_2 (z)$, $N=1$ & (e)~$\tilde{I}_2 (z)$, $N=3$ & (f)~$\tilde{I}_2 (z)$, $N=10$ \\          
		\end{tabular}
\caption{The inversion performance of the indicator functions $I_1(z)$ and $\tilde{I}_2(z)$ for an extended scatterer with the signal function $\lambda_N (t) $ under different numbers of periods. The noise level is $\sigma=5\%$.}
\label{extended scatterer with N=1,3,10}
\end{figure}

\subsubsection*{Example 2: Comparison of $I_1(z)$ and $\tilde{I}_2 (z)$ with different moving speeds of the emitter}

The moving speed is an important factor for a moving point source. This example considers the influence of the moving speed on our indicator functions. The reconstruction of five point-like circular scatterers centered at $(-24,-24)$, $(-24,15)$, $(0,0)$, $(10,-20)$ and $(24,20)$ are considered in this example. Choose $N=10$ as the number of periods of the signal function. The angular speeds of the moving emitter are chosen as $\omega_1=3\omega_0$, $\omega_2=7\omega_0$ and $\omega_3=9\omega_0$. Reconstructions with the indicator functions $I_1(z)$ and $\tilde{I}_2 (z)$ at different moving speeds are shown in Figure \ref{compareI_speed}.

As shown in Figure \ref{compareI_speed}, the indicator function $\tilde{I}_2 (z)$ performs significantly better than $I_1 (z)$. The effect of $I_1 (z)$ is affected by the moving speed, while $\tilde{I}_2 (z)$ remains almost entirely unaffected and can accurately reconstruct the positions of the point-like scatterers at different speeds, even when the moving speed of the emitter approaches the sound speed.

\begin{figure}[!htbp]
		\centering
		\begin{tabular}{ccc}
        \includegraphics[width=0.3\textwidth]{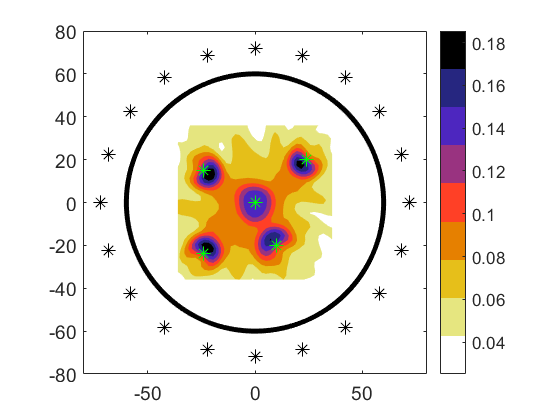}
			& \includegraphics[width=0.3\textwidth]{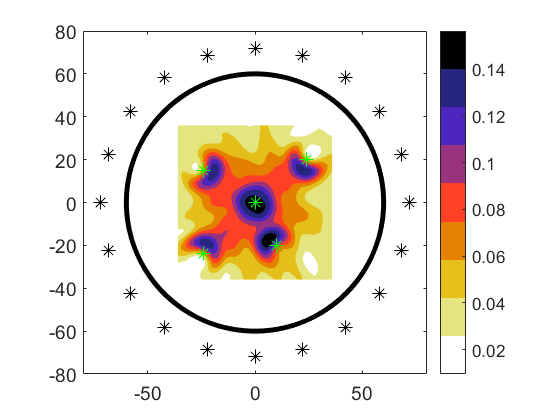}
            & \includegraphics[width=0.3\textwidth]{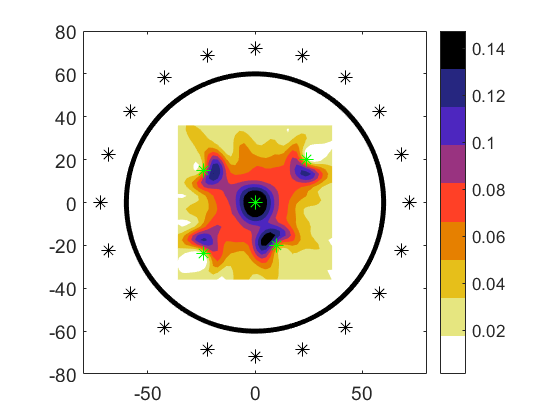} \\
            (a)~$I_1(z), |v_1|=\omega_1 R_S\approx 81$ & 
            (b)~$I_1(z), |v_2|=\omega_2 R_S\approx 188$ & 
            (c)~$I_1(z),|v_3|=\omega_3 R_S\approx 242$ \\
			\includegraphics[width=0.3\textwidth]{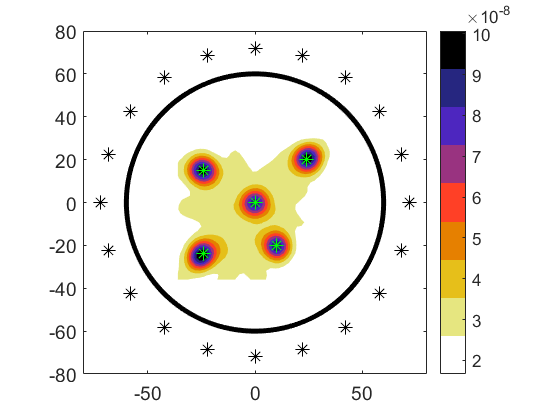}
			& \includegraphics[width=0.3\textwidth]{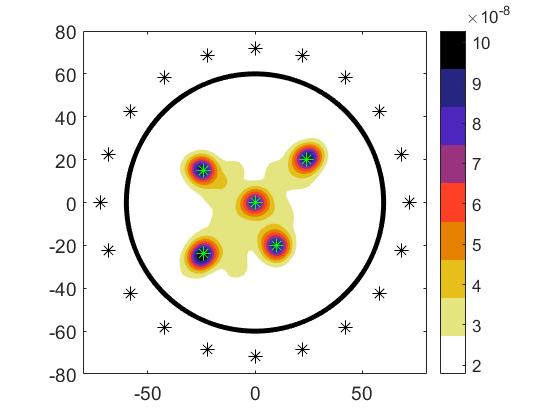}
            & \includegraphics[width=0.3\textwidth]{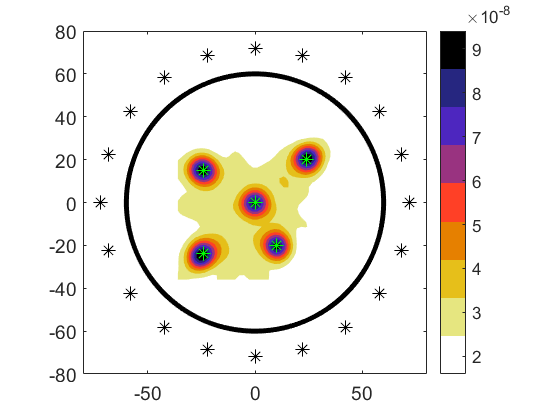} \\
            (d)~$\tilde{I}_2 (z), |v_1|=\omega_1 R_S\approx 81$ & 
            (e)~$\tilde{I}_2 (z), |v_2|=\omega_2 R_S\approx 188$ & 
            (f)~$\tilde{I}_2 (z),|v_3|=\omega_3 R_S\approx 242$ \\
		\end{tabular}
\caption{The inversion performance of the indicator functions $I_1(z)$ and $\tilde{I}_2(z)$  with different moving speeds of the emitter. The noise level is $\sigma=5\%$.}
\label{compareI_speed}
\end{figure}

According to the analysis of Example $1$ and Example $2$, in the following two-dimensional experiments, the indicator function is chosen as $\tilde{I}_2(z)$, the number of periods is chosen as $N=10$, and the angular speed of the moving emitter is chosen as $\omega_0$.

\subsubsection*{Example 3: Reconstruction of multiple point-like scatterers with different noise levels}
In this example, we consider the reconstruction using the indicator function $\tilde{I}_2(z)$ for three cases of multiple point-like scatterers: three point-like scatterers which are centered at $(-24,-24)$, $(0,20)$ and $(15,-10)$, five point-like scatterers which are centered at $(-24,-24)$, $(-24,15)$, $(0,0)$, $(10,-20)$ and $(24,20)$, five point-like scatterers which are centered at $(-28,-28)$, $(-20,-20)$, $(0,0)$, $(8,12)$ and $(20,0)$. The reconstructions with different noise levels can be seen in Figure 
\ref{point-like}.

As shown in Figure \ref{point-like}, the indicator function $\tilde{I}_2(z)$ shows moderately inversion capability for multiple point-like scatterers even if the scatterers are close to each other. Also, the experiment shows that the algorithm is robust against noise.

\begin{figure}[!htbp]
		\centering
		\begin{tabular}{ccc}
			\includegraphics[width=0.3\textwidth]{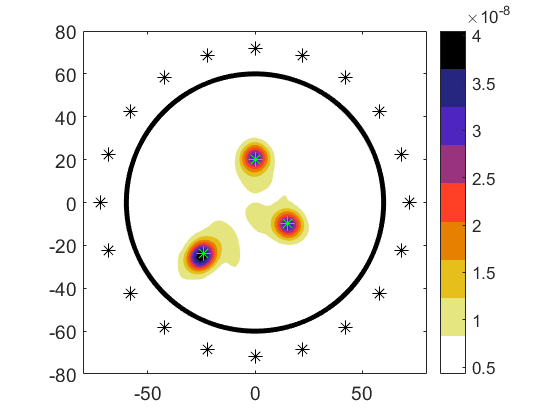}
			& \includegraphics[width=0.3\textwidth]{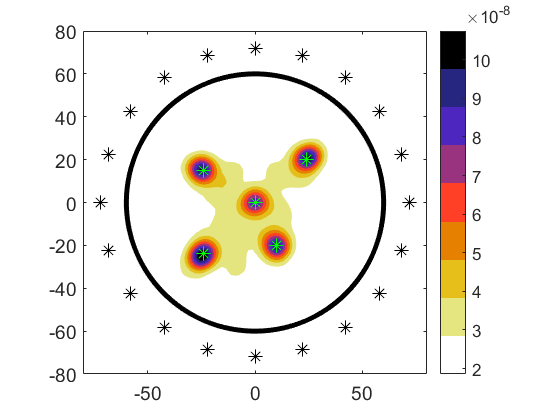}
            & \includegraphics[width=0.3\textwidth]{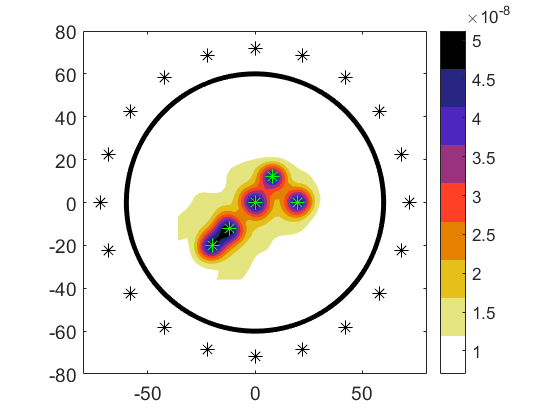} \\
            (a)~$\sigma =5\%$ & (b)~$\sigma =5\%$ & (c)~$\sigma =5\%$ \\
            \includegraphics[width=0.3\textwidth]{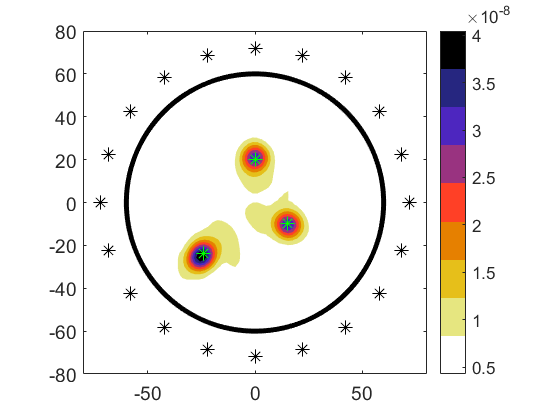}
			& \includegraphics[width=0.3\textwidth]{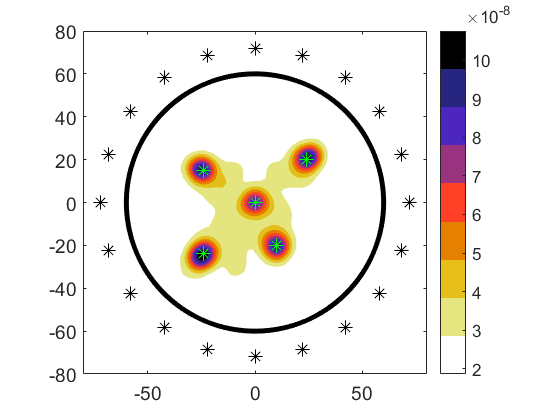}
            & \includegraphics[width=0.3\textwidth]{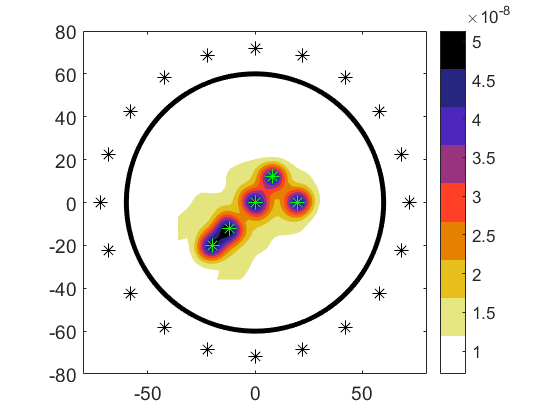} \\
            (d)~$\sigma =20\%$ & (e)~$\sigma  =20\%$ & (f)~$\sigma =20\%$ \\
		\end{tabular}
\caption{The reconstruction of multiple point-like scatterers using the indicator function $\tilde{I}_2 (z)$ with different noise levels. }
\label{point-like}
\end{figure}

\subsubsection*{Example 4: Reconstruction of a single extended scatterer with different noise levels}
The parameterized boundaries of an acorn-shaped scatterer and a square scatterer centered at $(a,b)$ are respectively 
\begin{equation}\label{acorn_shaped}
  (a+R_a\cos\theta\sqrt{\frac{17}{4}+2\cos(3\theta)},
b+R_a\sin\theta\sqrt{\frac{17}{4}+2\sin(3\theta)}~)
\end{equation}
and 
\begin{equation}\label{square_shaped}
 (a+R_q(\sin^{3}\theta+\sin\theta+\cos^{3}\theta+\cos\theta),
b+R_q(\sin^{3}\theta+\sin\theta-\cos^{3}\theta-\cos\theta)),
\end{equation}
where $R_a,R_q\in \mathbb{R}$, $\theta \in [0,2\pi]$.

In this example, we consider the reconstruction of a single extended scatterer in three cases: a circular scatterer given by the parameterized boundary \eqref{ball_shaped} with $(a,b)=(0,0)$ and $R_b=10$, an acorn-shaped scatterer given by \eqref{acorn_shaped} with $(a,b)=(0,0)$ and $R_a=6$ and a square scatterer given by \eqref{square_shaped} with $(a,b)=(-8,-8)$ and $R_q=3\sqrt{2}$. Reconstructions with different noise levels can be seen in Figure \ref{extended}.

As shown in Figure \ref{extended}, the indicator function $\tilde{I}_2(z)$ also achieves effective reconstruction for a single extended scatterer, whether its center is located at the center of the sampling region or not. Additionally, the algorithm is robust to noise.

\begin{figure}[!htbp]
		\centering
		\begin{tabular}{ccc}
        \includegraphics[width=0.3\textwidth]{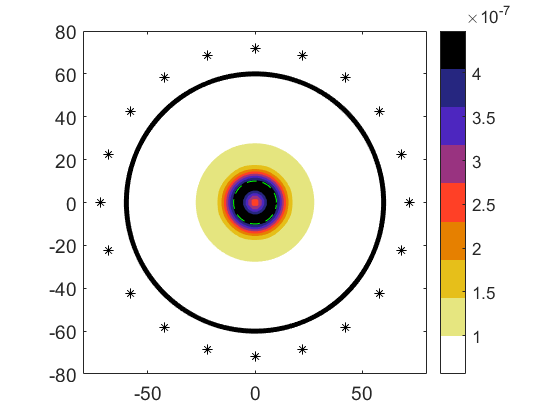}
			& \includegraphics[width=0.3\textwidth]{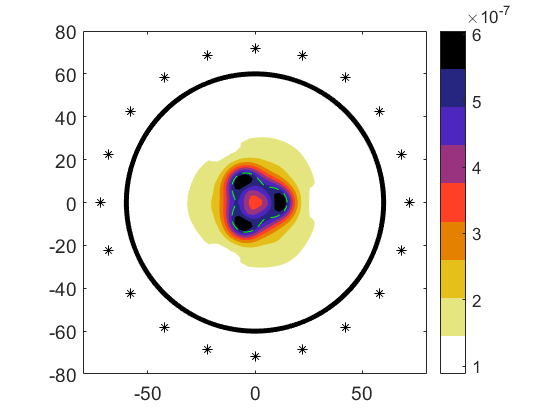}
            & \includegraphics[width=0.3\textwidth]{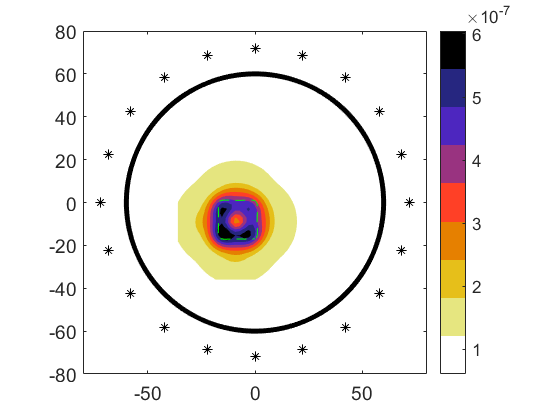} \\
            (a)~ $\sigma  =5\%$ & (b)~ $\sigma  =5\%$ & (c)~ $\sigma  =5\%$ \\
            \includegraphics[width=0.3\textwidth]{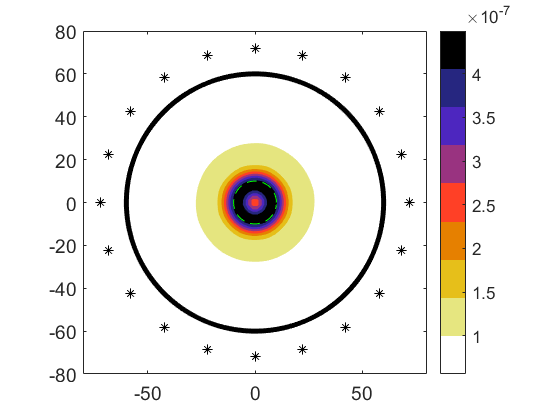}
			& \includegraphics[width=0.3\textwidth]{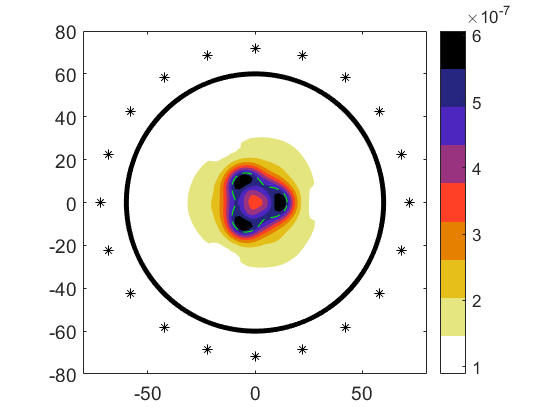}
            & \includegraphics[width=0.3\textwidth]{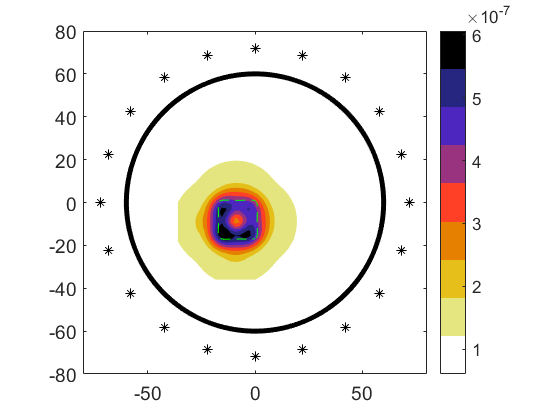} \\
            (d)~$\sigma  =20\%$ & (e)~$\sigma  =20\%$ & (f)~$\sigma  =20\%$ \\
		\end{tabular}
\caption{The reconstruction of a single extended scatterer using the indicator function $\tilde{I}_2(z)$ with different noise levels.}
\label{extended}
\end{figure}

\subsubsection*{Example 5: Reconstruction of a single extended scatterer with limited aperture data}

In this example, we consider the reconstruction of a single extended scatterer with limited aperture data-sets. The scatterers are chosen the same as that in Example $4$. The reconstructions with limited aperture data-sets can be seen in Figure \ref{half}.

As shown in Figure \ref{half}, if the number of receivers is halved, the inversion effect deteriorates. If the moving source only traverses a semicircle under the same moving speed, it results in effective reconstruction only on the emitter-equipped side.

\begin{figure}[!htbp]
		\centering
		\begin{tabular}{ccc}
			 \includegraphics[width=0.3\textwidth]{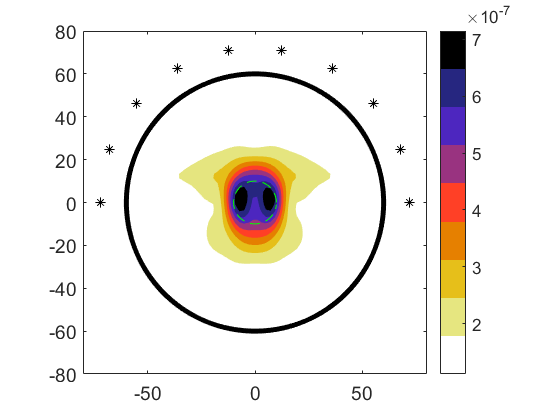}
			& \includegraphics[width=0.3\textwidth]{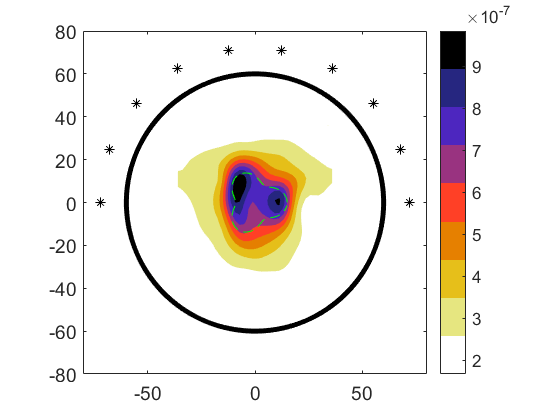}
            & \includegraphics[width=0.3\textwidth]{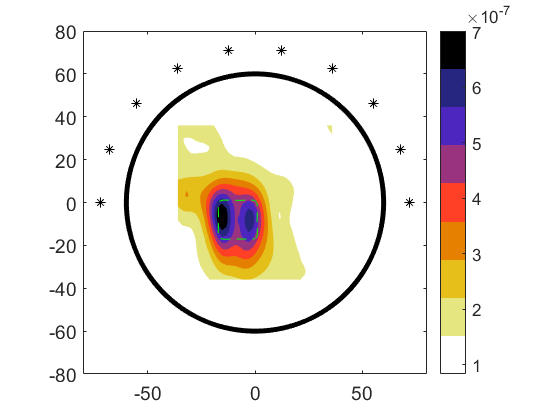} \\
            (a)& (b)& (c)\\
            \includegraphics[width=0.3\textwidth]{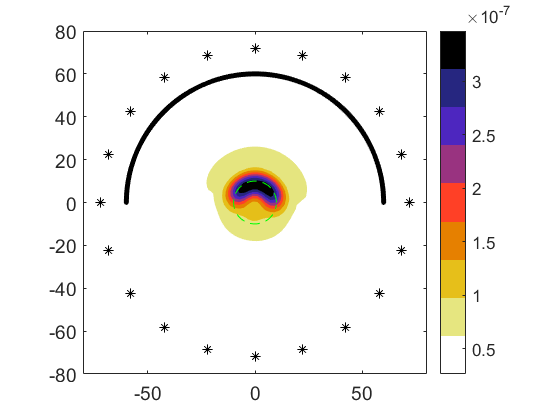}
			& \includegraphics[width=0.3\textwidth]{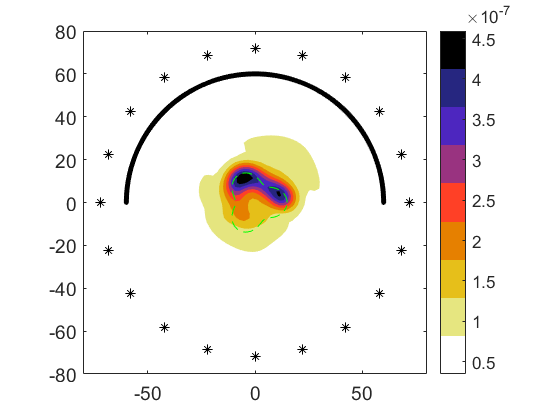}
            & \includegraphics[width=0.3\textwidth]{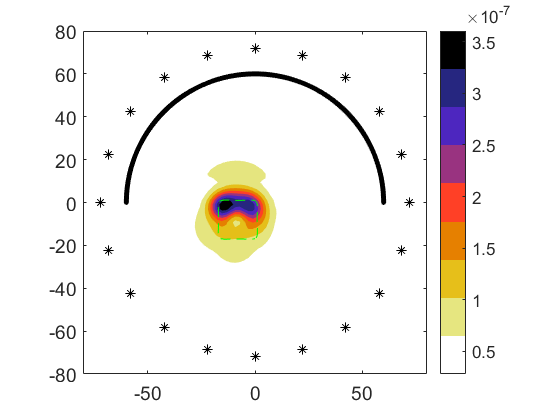} \\
            (d)& (e)& (f)\\
		\end{tabular}
\caption{The reconstruction of a single extended scatterer using the indicator function $\tilde{I}_2(z)$ with limited aperture data. The noise level is $\sigma  =5\%$.}
\label{half}
\end{figure}

\subsubsection*{Example 6: Reconstruction of two disconnected scatterers}
The boundary of a kite-shaped scatterer centered at $(a,b)$ is parameterized as 
\begin{equation}\label{kite-shaped}
(a+R_k(\cos\theta + 0.65 \cos 2\theta -0.65), b+R_k(1.5 \sin\theta )), 
\end{equation}
where $R_k\in\mathbb{R}$, $\theta  \in [0,2\pi]$. 

In this example, we consider the simultaneous reconstruction of two disconnected scatterers with a single moving emitter in three cases: an acorn-shaped scatterer given by \eqref{acorn_shaped} with $(a,b)=(-12,-12)$, $R_a=2.4$ together with a kite-shaped scatterer given by \eqref{kite-shaped} with  $(a,b)=(15,15)$, $R_k=6$, a circular scatterer given by \eqref{ball_shaped} with $(a,b)=(-15,-15)$, $R_b=6$ together with an square scatterer given by \eqref{square_shaped} with $(a,b)=(10,10)$, $R_q=3\sqrt{2}$, an acorn-shaped scatterer given by \eqref{acorn_shaped} with $(a,b)=(-10,-10)$, $R_a=3.6$ together with a circular scatterer given by \eqref{ball_shaped} with $(a,b)=(10,10)$, $R_b=2$. The reconstruction can be seen in Figure \ref{mult-extended}. 

As shown in Figure \ref{mult-extended}, the indicator function $\tilde{I}_2(z)$ achieves satisfactory reconstruction for two disconnected scatterers, even when there exists a significant size disparity between the two scatterers.

\begin{figure}[!htbp]
		\centering
		\begin{tabular}{ccc}
			\includegraphics[width=0.3\textwidth]{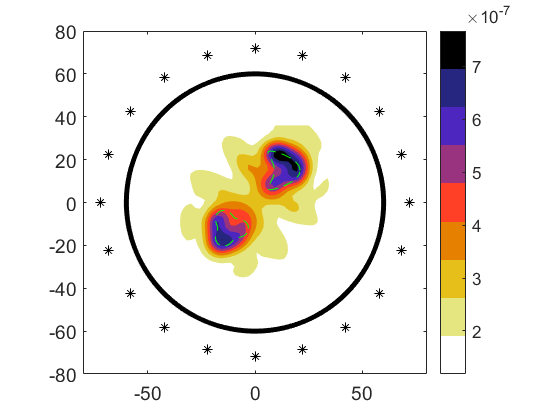}
			& \includegraphics[width=0.3\textwidth]{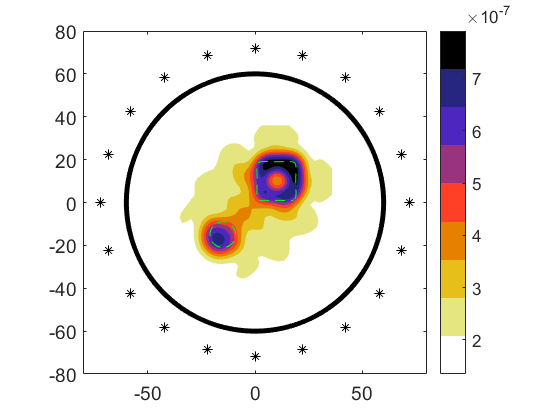}
            & \includegraphics[width=0.3\textwidth]{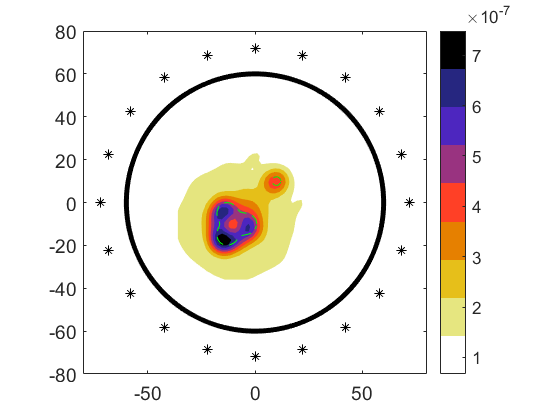}\\
            (a) & (b) & (c) \\
		\end{tabular}
\caption{The reconstruction of two disconnected scatterers using the indicator function $\tilde{I}_2(z)$. The noise level is $\sigma  =5\%$.}
\label{mult-extended}
\end{figure}

\subsection{Three-dimensional Experiments}

Three-dimensional wave data are generated using the k-Wave toolbox \cite{k-wave}. Set the moving trajectory of a single moving emitter in three-dimensional space as a spherical spiral line with radius $R_S=60$, and the parametric representation is
\begin{equation*}
    s(t) = (R_S \sin\alpha(t) \cdot \cos\beta(t), R_S \sin\alpha(t) \cdot \sin \beta(t), R_S \cos\alpha(t)),
\end{equation*}
where $\alpha(t) = \arccos(1-2p(t))$, $\beta(t) = 2 n \pi  p(t)$, $n=5$ denotes the number of spiral turns, $p(t)=\frac{t}{ T^{tot} }$, $t\in [0, T^{tot} ]$. The moving path is presented in Figure \ref{path in 3D} $(a)$. 

The moving point source emits the same period wave $\lambda_N(t)$ in the time interval $[0, T^{tot}]$ with $T^{tot}=3T$, $N=10$ and $T=14$. The positions of $50$ sensors are generated by the makeCartSphere function in the k-Wave toolbox, and they are distributed on a spherical surface with a radius of $R_m=72$, centered at $(0,0,0)$. The sketch of the three-dimensional problem with a cubical scatterer is shown in Figure \ref{path in 3D} $(b)$. The sampling points are chosen as $21\times 21\times 21$ uniform discrete points in the sampling region $[-40,40]\times [-40,40]\times [-40,40]$.

\begin{figure}[!htbp]
		\centering
		\begin{tabular}{ccc}
            \includegraphics[width=0.35\textwidth]{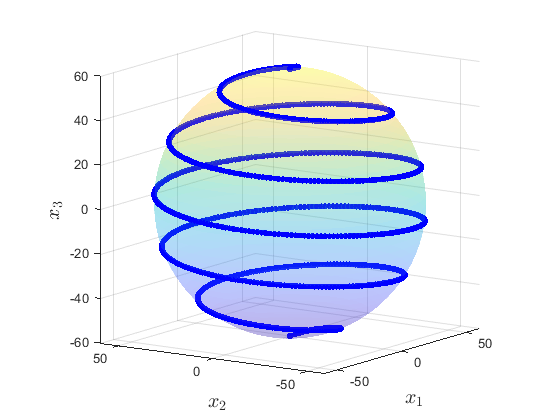}
            & \includegraphics[width=0.4\textwidth]{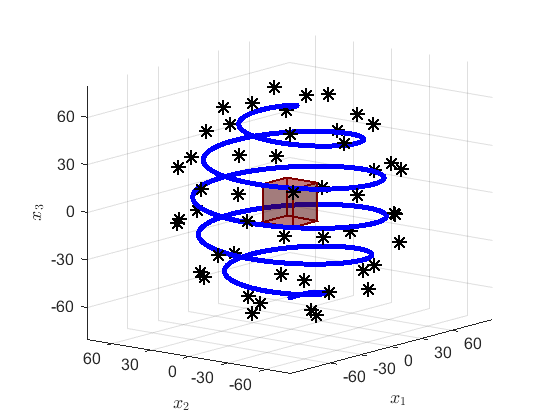}\\
            (a) & (b) 
		\end{tabular}
\caption{(a) The path of a single moving point source in the three-dimensional space. (b) The sketch of the three-dimensional problem with a cubical scatterer. }
\label{path in 3D}
\end{figure}

\subsubsection*{Example 7: Reconstruction of point-like scatterers in the three-dimensional space }

In this example, we consider the reconstruction of a single point-like scatterer located at $(8,-16,4)$ and the reconstruction of two point-like scatterers located at $(-20,-16,-12)$ and $(12,16,20)$ respectively. Figure \ref{1point} shows the inversion results and planar cross-sections of the example. 

As shown in Figure \ref{1point}, the indicator function $\tilde{I}_2(z)$ provides effective reconstructions of point-like scatterers in the three-dimensional space.

\begin{figure}[!]
		\centering
		\begin{tabular}{ccc}
            \includegraphics[width=0.3\textwidth]{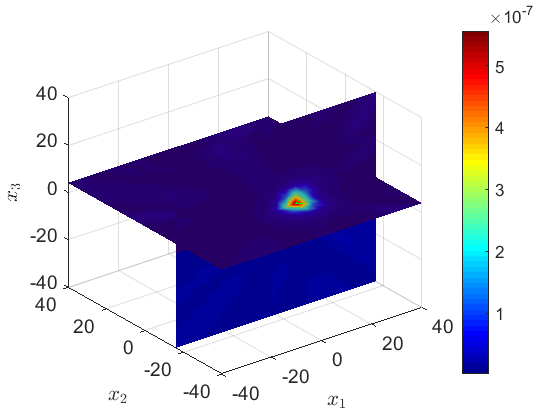}
			& \includegraphics[width=0.3\textwidth]{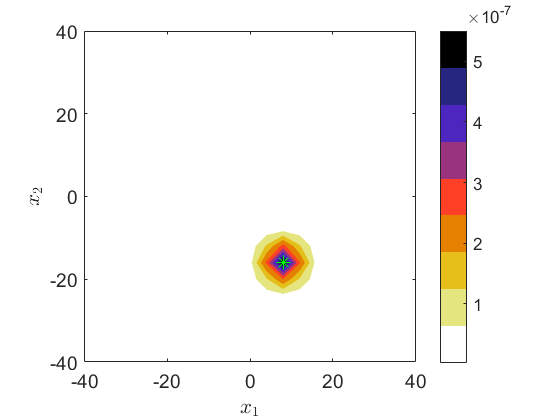}
            & \includegraphics[width=0.3\textwidth]{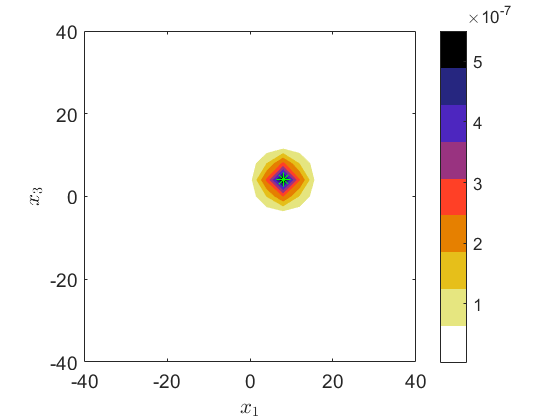} \\
            (a)& (b)& (c)\\
             \includegraphics[width=0.3\textwidth]{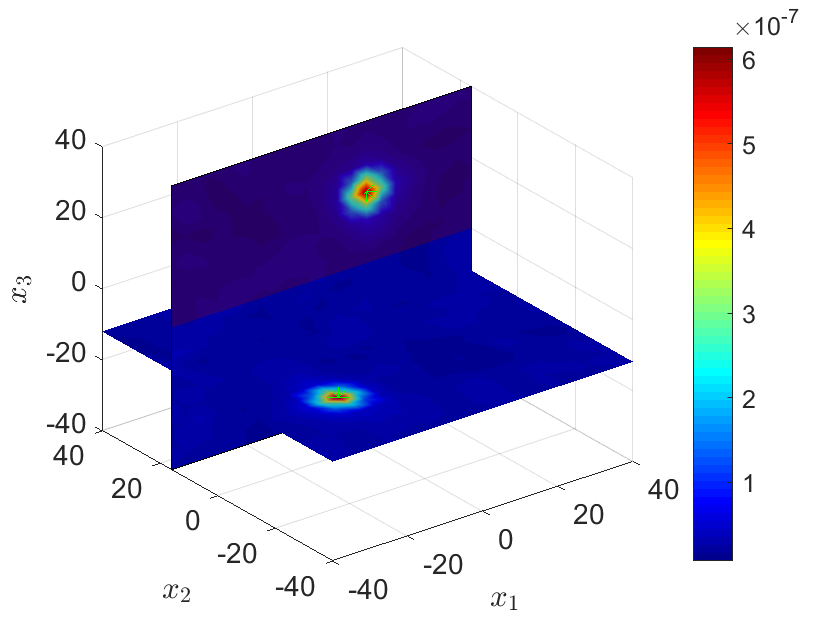}
			& \includegraphics[width=0.3\textwidth]{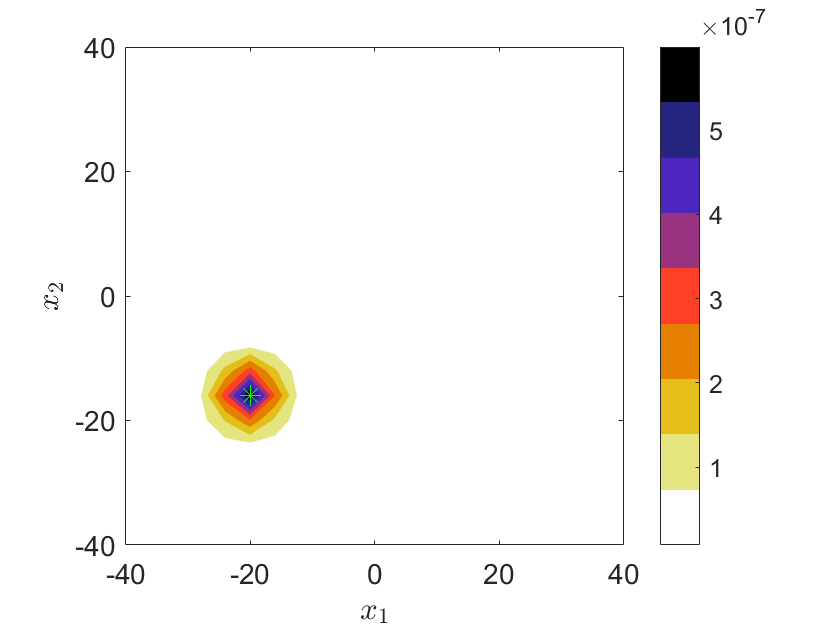}
            & \includegraphics[width=0.3\textwidth]{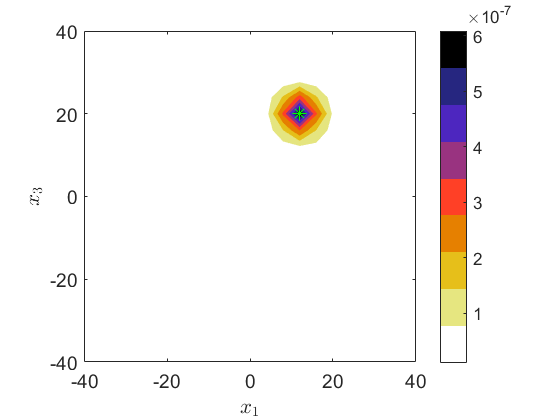} \\
            (d)& (e)  & (f)\\
		\end{tabular}
\caption{The reconstruction of point-like scatterers in the three-dimensional space using the indicator function $\tilde{I}_2(z)$. The noise level is $\sigma  =5\%$. (a,d) The reconstruction in the three-dimensional space. (b,e) A cross-section through the center of a scatterer parallel to the $x_1-x_2$ plane. (c,f) A cross-section through the center of a scatterer parallel to the $x_1-x_3$ plane. }
\label{1point}
\end{figure}

\subsubsection*{Example 8: Reconstruction of extended scatterers in the three-dimensional space }

In this example, we consider the reconstruction of extended scatterers in two cases: a single cube
$[-11,11]\times[-11,11]\times [-11,11]$, two disconnected cubes $[12,24]\times[12,24]\times [12,24]$ and $[-24,-12]\times[-24,-12]\times [-24,-12]$.
Figure \ref{1cube} and Figure \ref{2cubes} demonstrate the effective reconstructions of cubes in the three-dimensional space.

\begin{figure}[!htbp]
		\centering
		\begin{tabular}{ccc}
\includegraphics[width=0.3\textwidth]{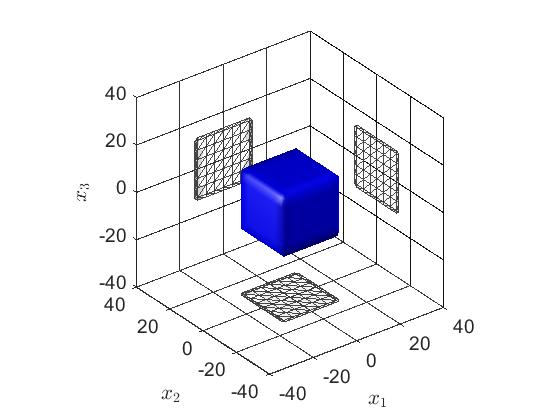}
			& \includegraphics[width=0.3\textwidth]{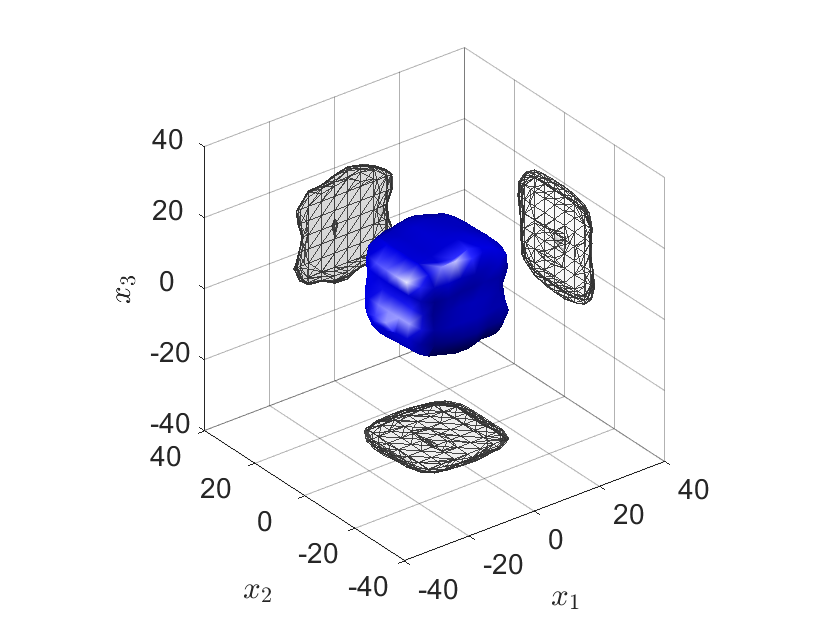}
            & \includegraphics[width=0.3\textwidth]{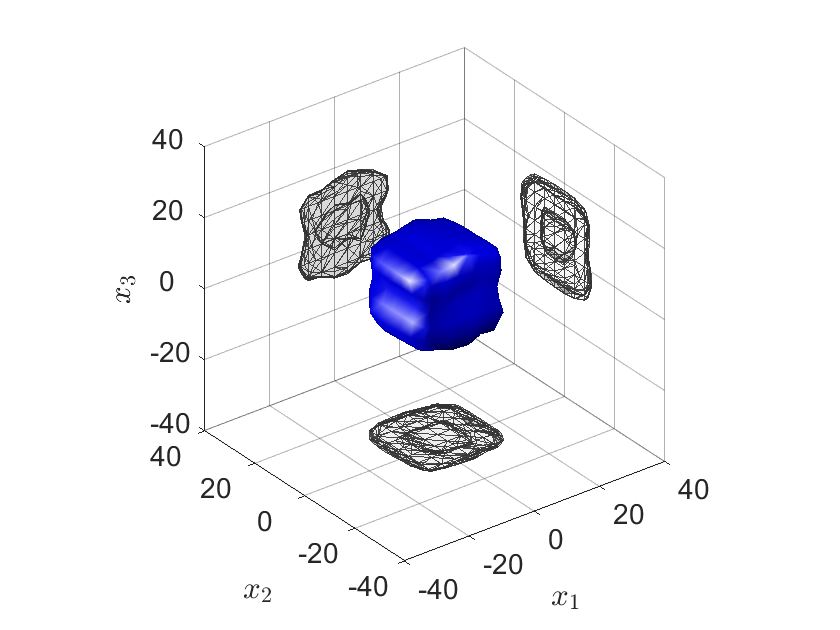} \\
            (a)& (b)& (c)\\
            \includegraphics[width=0.3\textwidth]{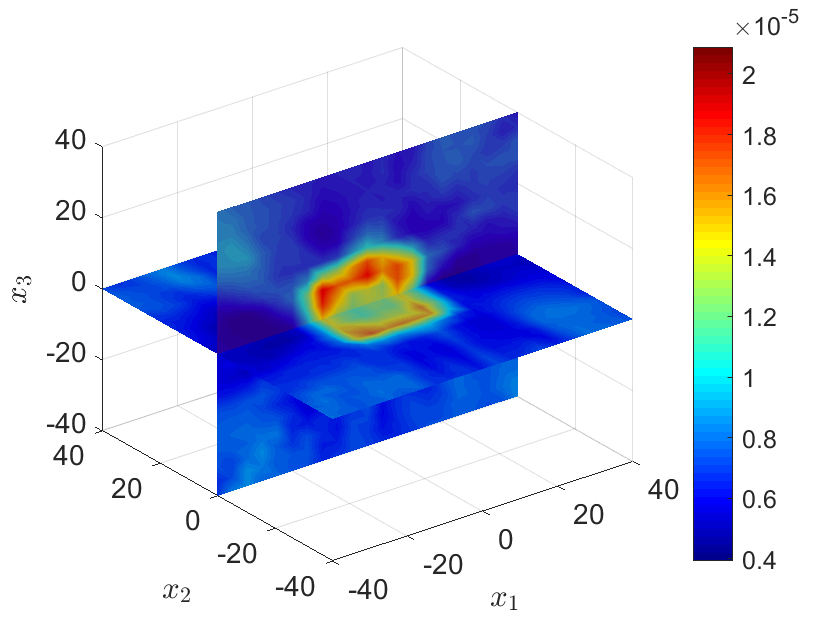}
			& \includegraphics[width=0.3\textwidth]{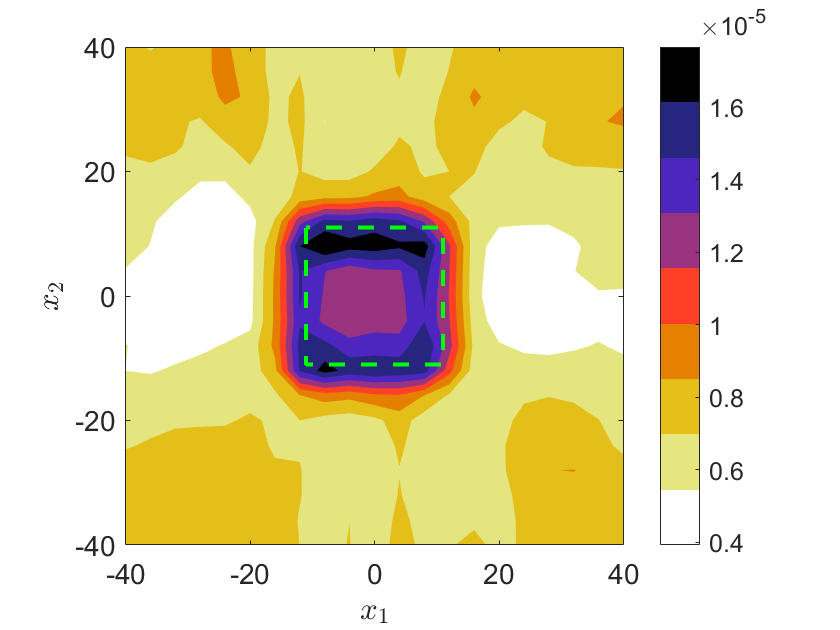}
            & \includegraphics[width=0.3\textwidth]{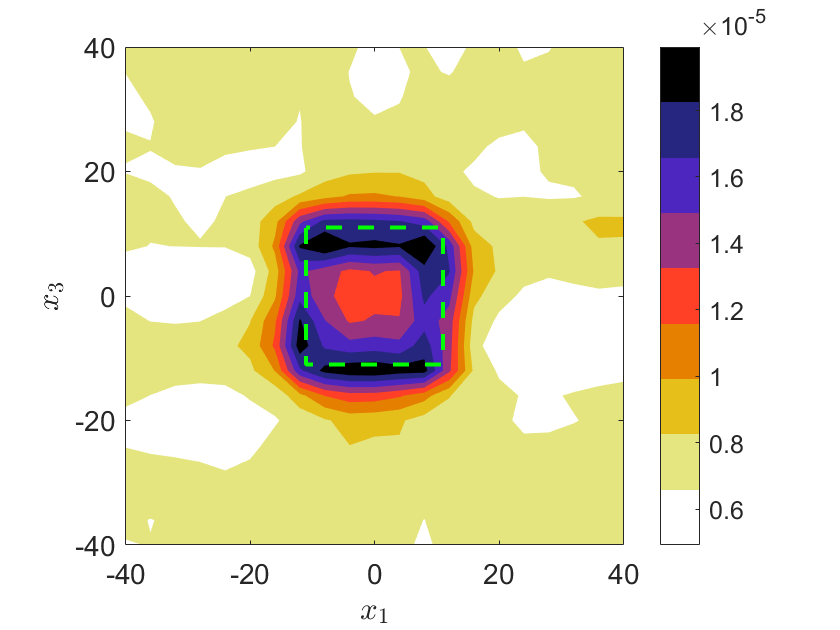} \\
            (d)& (e)& (f)\\
		\end{tabular}
\caption{The reconstruction of a single cube using the indicator function $\tilde{I}_2(z)$. The noise level is $\sigma  =5\%$. (a) Real location of the cube. (b-c) The reconstructions of the cube with the iso-surface level $1.2\times10^{-5}$ and $1.4\times10^{-5}$, respectively. (d-f) The slices through the center of the cube, where the dotted lines represent the boundaries of the cube.}
\label{1cube}
\end{figure}

\begin{figure}[!htbp]
		\centering
		\begin{tabular}{ccc}
			\includegraphics[width=0.3\textwidth]{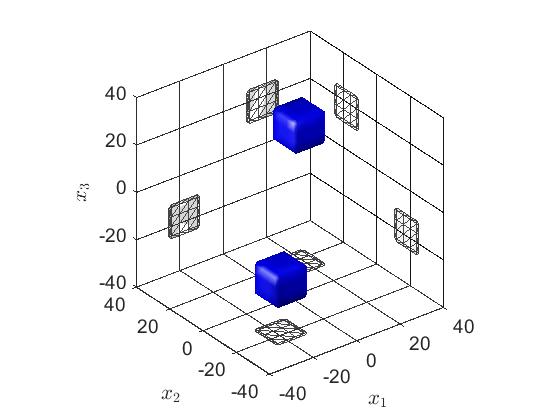}
& \includegraphics[width=0.3\textwidth]{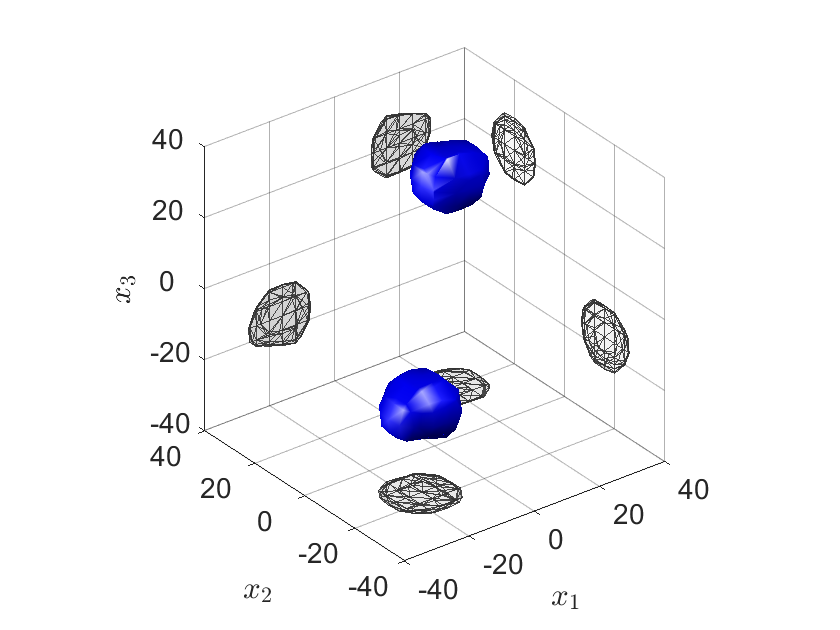}
            & \includegraphics[width=0.3\textwidth]{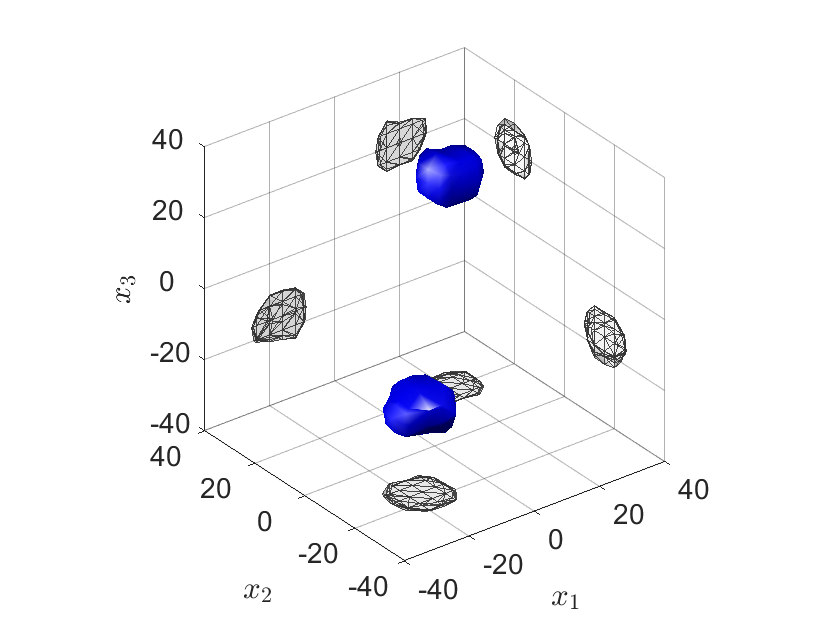} \\
            (a)& (b)& (c)\\
            \includegraphics[width=0.3\textwidth]{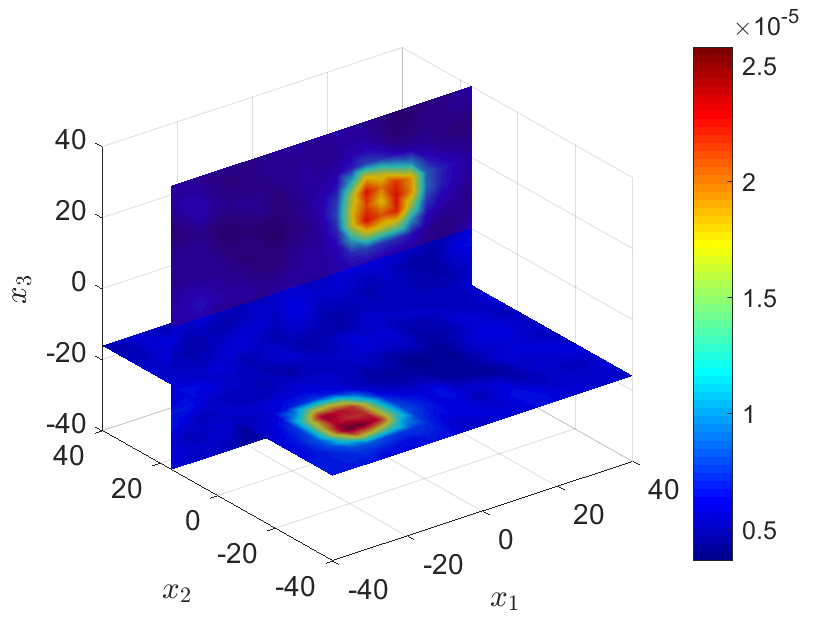}
			& \includegraphics[width=0.3\textwidth]{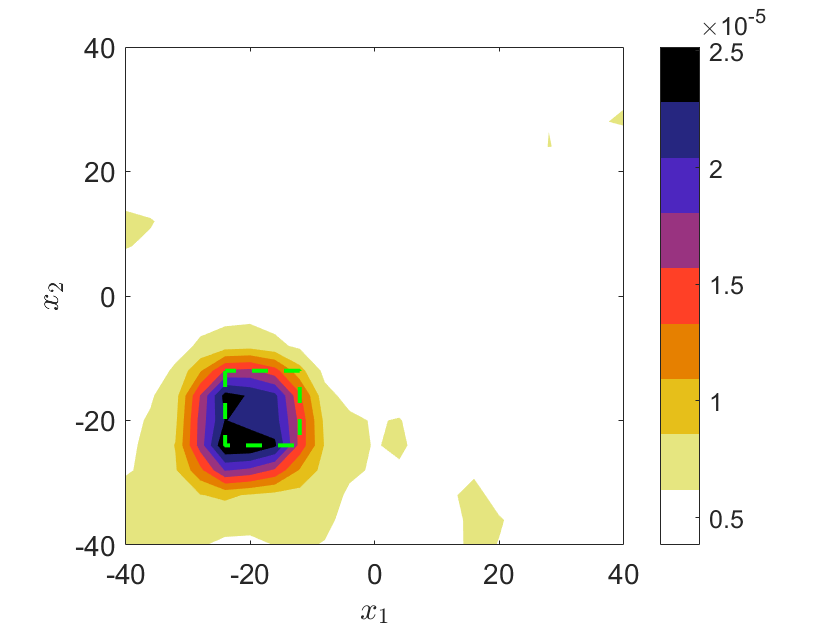}
            & \includegraphics[width=0.3\textwidth]{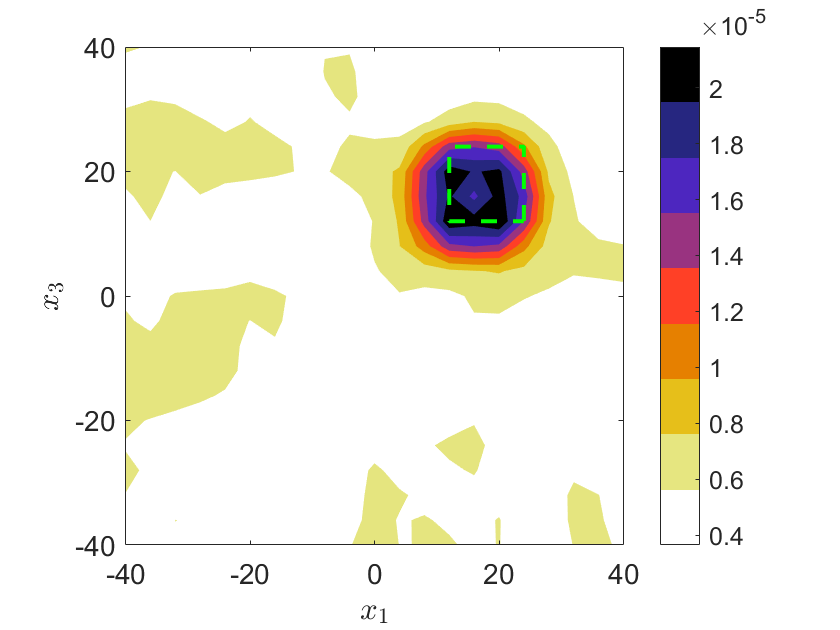} \\
            (d)& (e)& (f)\\
		\end{tabular}
\caption{The reconstruction of two disconnected cubes using the indicator function $\tilde{I}_2(z)$. The noise level is $\sigma  =5\%$.  (a) Real location of the cubes. (b-c) The reconstructions of the cubes with the iso-surface level $1.5\times10^{-5}$ and $1.8\times10^{-5}$, respectively. (d-f) The slices through the centers of the cubes, where the dotted lines represent the boundaries of the cubes.}
\label{2cubes}
\end{figure}

\section{ conclusion}
This paper has studied the time domain forward scattering and inverse scattering problems with a single moving emitter. Approximate solutions for the forward scattering problem have been analyzed. Apart from the basic idea to construct an indicator function based on the approximate solutions, a novel indicator function has been proposed to construct the direct sampling method. Comprehensive numerical experiments have been provided to show the effectiveness of the proposed algorithm for reconstructing both point-like scatterers and extended scatterers with a single moving emitter, in both two-dimensional and three-dimensional spaces.

\begin{center}
{\bf ACKNOWLEDGMENTS}
\end{center}

The work of Bo Chen was supported by the National Natural Science Foundation of China (NSFC, No. 11671170). The work of Peng Gao was supported by the Scientific Research Foundation of Civil Aviation University of China (No. 2020KYQD109).


\end{document}